\newtheorem{theorem}{Theorem}
\newtheorem{koro}[theorem]{Corollary}
\newtheorem{thm}{Theorem}[section]
\newtheorem{prop}[thm]{Proposition}
\newcommand{\lin}{{\mathrm{lin}}\,}
\newcommand{\R}{{\mathbb R}}
\def\section{%
\setcounter{equation}{0} \setcounter{theorem}{0} \@startsection
{section}{1}{\z@}{-4.0ex plus -1ex minus
    -.2ex}{2.3ex plus .2ex}{\bf}}
\theoremstyle{definition}
\begin{document}

\title{Grassmann measures of convex bodies}

\author{Wolfgang Weil}
\address{Karls\-ruhe Institute of Technology, Department of Mathematics,
D-76128 Karls\-ruhe, Germany}
\email{wolfgang.weil@kit.edu}
\urladdr{http://www.math.kit.edu/$\sim$weil/}
\date{\today}
\subjclass[2010]{52A20, 52A38, 52A39, 53B45}
\keywords{flag measure, Grassmann measure, symmetric convex body, area measure, valuation, Klain function, Cosine transform, touching measure.}

\begin{abstract}
\noindent Flag measures are descriptors of convex bodies $K$ in $d$-dimensional Euclidean space generalizing the classical area measures. They have been used to provide general integral formulas for mixed volumes (see Hug, Rataj and Weil (2017)). Here, we consider an image measure $\gamma_j(K,\cdot)$ of flag measures, defined on the Grassmannian $G(d,j)$ of affine $j$-spaces, $1\le j\le d-1$, and show that it determines centrally symmetric bodies $K$ of dimension $\geq j+1$ uniquely. We then explain that Grassmann measures appear in the representation of smooth, translation invariant, continuous and even valuations due to Alesker (2003). Using this connection, we prove a uniqueness result for projection averages of area measures and we finally discuss a Grassmann version of the natural touching measure of convex bodies. 
\end{abstract}
\maketitle

\section{Introduction}

Let ${\mathcal K}^d$ be the space of  convex bodies (non-empty compact convex sets) in $\R^d, d\ge 2,$ supplied with the Hausdorff metric. Classical descriptors of bodies $K\in{\mathcal K}^d$ are the area measures $\Psi_j(K,\cdot)$, $j=1,\dots ,d-1$. These are finite Borel measures on the unit sphere $S^{d-1}$, which arise from a local Steiner formula and which determine the body $K$ uniquely, up to a translation (and under a dimensional condition). The total measure $V_j(K)=\Psi_j(K,S^{d-1})$ is the $j$th intrinsic volume of $K$. See \cite{S}, for definitions, properties and further results on area measures and intrinsic volumes, as well as for all other notions from convex geometry which appear throughout the following. 

As a more recent development, measures on flag manifolds have been introduced and studied to describe convex bodies, a survey is given in \cite{HTW}. They also have a history in geometric measure theory (see \cite{HTW}, for details). Flag measures allow to extend formulas for mixed volumes and projection functions from special cases, where area measures are involved, to more general situations (see \cite{GHHRW, HRW, HRW2}). There are several isomorphic representations possible for these measures. In the following, we concentrate on the measure $\psi_j(K,\cdot)$, which is a finite measure on the flag space 
$$F(d,j+1):=\{(u,L) : u\in L\cap S^{d-1}, L\in G(d,j+1)\}, \ j=1,\dots ,d-1.$$ 
Here, $G(d,k)$ denotes the Grassmannian of $k$-dimensional subspaces in $\R^d$. Since the area measure $\Psi_j(K,\cdot)$ appears as the image of $\psi_j(K,\cdot)$ under the projection $(u,L)\mapsto u$, the flag measure contains more information on the boundary structure of a body $K$, in case $j\le d-2$. Clearly, $\psi_j(K,\cdot)$ also determines the body $K$ (up to translations and under the restrictions mentioned above). It is a first goal of this paper to  study another image of flag measures, namely the measure $\gamma_j(K,\cdot)$, $j=1,...,d-1$, on $G(d,j)$, which arises from the mapping $(u,L)\mapsto u^\bot\cap L$. It may be called the $j$th {\it Grassmann measure} of $K$. For $j=d-1$, this image measure just corresponds to the symmetrized area measure $\frac{1}{2}[\Psi_{d-1}(K,\cdot)+\Psi_{d-1}(-K,\cdot)]$, if we identify the subspace $u^\bot\cap\R^d$ with the vector $u\in S^{d-1}$ (or with $-u$). Therefore, we concentrate on the case $j\le d-2$, in the following. Since $\gamma_j(K,\cdot)$ is invariant under translations $K\mapsto K+x, x\in\R^d,$ and reflections $K\mapsto -K$, a uniqueness result can only be expected for $K$ in the class ${\mathcal K}^d_s =\{ K\in{\mathcal K}^d : K=-K\}$ of centrally symmetric convex bodies. The following is  our first result. It involves the invariant probability measure $\nu_j$ on $G(d,j)$ and the projection function $L\mapsto V_j(K|L), L\in G(d,j),$ of a convex body $K$.

\begin{theorem}\label{grassmann} For $j\in\{ 1,\dots ,d-2\}$, we have 
$$\gamma_j(K,A) = \int_{A} V_j(K|L)\nu_j(dL)$$
for all $K\in{\mathcal K}^d$ and all Borel sets $A\subset G(d,j)$.
\end{theorem}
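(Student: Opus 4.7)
The plan is to view both sides, for each fixed Borel set $A\subset G(d,j)$, as continuous, translation-invariant, even valuations on $\mathcal{K}^d$ of degree $j$ in $K$, and then apply Klain's uniqueness theorem to reduce the identity to agreement of Klain functions on $G(d,j)$. Translation invariance, continuity, the valuation property and $j$-homogeneity of $K\mapsto\gamma_j(K,A)$ are inherited from $\psi_j(K,\cdot)$; evenness follows because $\psi_j(-K,\cdot)$ is the image of $\psi_j(K,\cdot)$ under $(u,L)\mapsto(-u,L)$, and the projection $(u,L)\mapsto u^\bot\cap L$ commutes with $u\mapsto-u$. The same properties for the right side are immediate from those of $K\mapsto V_j(K|L)$.

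On a $j$-dimensional body $K\subset L_0\in G(d,j)$, $V_j(K|L)=V_j(K)\,[L_0,L]$ with the Grassmann cosine $[L_0,L]:=|\det(p_L|_{L_0})|$; the Klain function of the right-hand side is therefore
\[
\kappa_{\mathrm{RHS}}(L_0)=\int_A [L_0,L]\,\nu_j(dL).
\]
For the left-hand side I would work from one of the integral representations of the flag measure discussed in \cite{HTW}, of the form
\[
\psi_j(K,B)=c_{d,j}\int_{S^{d-1}}\int_{G(u^\bot,j)}\mathbf{1}_B\bigl(u,\mathrm{lin}\{u\}+M\bigr)\,\nu_j^{u^\bot}(dM)\,\Psi_j(K,du),
\]
where $\nu_j^{u^\bot}$ is the invariant probability measure on the Grassmannian $G(u^\bot,j)$ of $j$-dimensional subspaces of $u^\bot$. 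Applied to $B=\{(u,L):u^\bot\cap L\in A\}$, the substitution $L=\mathrm{lin}\{u\}+M$ turns $u^\bot\cap L$ into $M$, so the inner indicator becomes $\mathbf{1}_A(M)$ and
\[
\gamma_j(K,A)=c_{d,j}\int_{S^{d-1}}\nu_j^{u^\bot}\bigl(A\cap G(u^\bot,j)\bigr)\,\Psi_j(K,du).
\]
For $K\subset L_0$ of dimension $j$, $\Psi_j(K,\cdot)$ is concentrated on $S^{d-1}\cap L_0^\bot$ and rotation-invariant there with total mass a constant multiple of $V_j(K)$, so
\[
\kappa_{\mathrm{LHS}}(L_0)=c'_{d,j}\int_{S^{d-1}\cap L_0^\bot}\nu_j^{u^\bot}\bigl(A\cap G(u^\bot,j)\bigr)\,\sigma^{L_0^\bot}(du),
\]
where $\sigma^{L_0^\bot}$ denotes the uniform probability measure on $S^{d-1}\cap L_0^\bot$.

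The theorem thus reduces to the integral-geometric identity
\[
c'_{d,j}\int_{S^{d-1}\cap L_0^\bot}\nu_j^{u^\bot}\bigl(A\cap G(u^\bot,j)\bigr)\,\sigma^{L_0^\bot}(du)=\int_A[L_0,L]\,\nu_j(dL).
\]
I would prove this by comparing two natural $SO(d)$-invariant disintegrations of the joint measure on the flag space $\{(u,L):L\in G(d,j),\ u\in S^{d-1}\cap L^\bot\cap L_0^\bot\}$: one fibering first over $u\in S^{d-1}\cap L_0^\bot$ and then over $L\in G(u^\bot,j)$, the other fibering first over $L\in G(d,j)$ and then over $u\in S^{d-1}\cap(L+L_0)^\bot$. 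Uniqueness of the invariant probability measure on this homogeneous flag space forces the two to coincide, and a direct Haar-theoretic computation shows that the Radon--Nikodym factor between $\nu_j$ and the $L$-marginal of the first decomposition is exactly the Grassmann cosine $[L_0,L]$. The main obstacle is bookkeeping the dimension-dependent constants $c_{d,j}$ and $c'_{d,j}$; the cleanest way to pin them down is to evaluate both sides on the Euclidean unit ball, where rotational symmetry reduces each to an explicit multiple of $\nu_j(A)$. The hypothesis $j\leq d-2$ enters through the positive dimensionality of $G(u^\bot,j)$: precisely this smoothing integration renders $\gamma_j(K,\cdot)$ absolutely continuous with respect to $\nu_j$.
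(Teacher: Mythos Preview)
Your overall framework is the right one, and in fact the paper itself uses the Klain-function argument at the very end (to pass from centrally symmetric bodies to all of $\mathcal K^d$). The difference is that the paper first establishes the formula on generalized zonoids via the explicit expression of $\psi_j$ in terms of the generating measure $\rho(K,\cdot)$ and the projection generating measure $\rho_{(j)}(K,\cdot)$, reduces to an integral-geometric identity on Grassmannians, and only then invokes Klain. Your route tries to bypass the zonoid calculation entirely and compute the Klain function of the left side directly.

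The genuine gap is the representation you invoke for the flag measure,
\[
\psi_j(K,B)\;=\;c_{d,j}\int_{S^{d-1}}\int_{G(u^\bot,j)}\mathbf 1_B\bigl(u,\operatorname{lin}\{u\}+M\bigr)\,\nu_j^{u^\bot}(dM)\,\Psi_j(K,du).
\]
No such formula holds; it asserts that, under $\psi_j(K,\cdot)$, the conditional law of $L$ given $u$ is the \emph{uniform} measure on $(j{+}1)$-spaces through $u$, independently of $K$. From the defining projection formula
\[
\psi_j(K,\cdot)=\int_{G(d,j+1)}\int_{S^{d-1}\cap E}\mathbf 1\{(u,E)\in\cdot\}\,\Psi_j'(K|E,du)\,\nu_{j+1}(dE),
\]
one sees that the joint law of $(u,E)$ is obtained by first choosing $E$ with $\nu_{j+1}$ and then $u$ according to $\Psi_j'(K|E,\cdot)$; the resulting conditional of $E$ given $u$ is \emph{not} uniform (indeed, $\psi_j$ carries strictly more information than $\Psi_j$ for $j\le d-2$, as the paper emphasizes). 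Concretely, for a $j$-dimensional $K\subset L_0$ one has $\Psi_j'(K|E,\cdot)\propto V_j(K|E)\,(\delta_{u_E}+\delta_{-u_E})$ with $u_E$ the unit normal to $L_0|E$ in $E$, so that
\[
\gamma_j(K,A)\;\propto\;V_j(K)\int_{G(d,j+1)}\mathbf 1\{L_0|E\in A\}\,\langle L_0,E\rangle\,\nu_{j+1}(dE),
\]
which is the identity the paper actually uses. This is \emph{not} what your disintegration yields; your final identity
\[
c'_{d,j}\int_{S^{d-1}\cap L_0^\bot}\nu_j^{u^\bot}\bigl(A\cap G(u^\bot,j)\bigr)\,\sigma^{L_0^\bot}(du)\;=\;\int_A[L_0,L]\,\nu_j(dL)
\]
is false already for $d=3$, $j=1$: parametrising $u=(\cos\alpha)e_2+(\sin\alpha)e_3$ and $M=\operatorname{span}\{(\cos\beta)e_1+(\sin\beta)w\}$ with $w\perp e_1,u$, the left side gives $|m_1|=|\cos\beta|$ the arcsine law (density $\propto(1-t^2)^{-1/2}$), whereas the right side gives density $\propto t$. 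The flag space $\{(u,L):u\in L_0^\bot\cap L^\bot\}$ you propose is not homogeneous under the stabiliser of $L_0$ (that stabiliser fixes the distinguished $j$-plane $L_0\subset u^\bot$), so the ``uniqueness of invariant measure'' argument does not apply either.

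If you keep the Klain-function strategy but compute the Klain function of the left side straight from the definition of $\psi_j$ (rather than via the nonexistent disintegration), you land exactly on the Grassmannian identity the paper uses; the remaining work is then the same.
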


\begin{koro}\label{unique-1} Let $K,K'\in{\mathcal K}^d_s$ with $\dim K>j$, $\dim K'>j$, $j\in\{ 1,\dots ,d-2\}$. Then, $\gamma_j(K,\cdot )=\gamma_j(K',\cdot)$ implies $K=K'$.
\end{koro}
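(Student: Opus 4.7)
The plan is to combine Theorem~\ref{grassmann} with the classical uniqueness theorem for $j$-th projection functions of centrally symmetric convex bodies. First, applying Theorem~\ref{grassmann} to both $K$ and $K'$ exhibits each measure $\gamma_j(K,\cdot), \gamma_j(K',\cdot)$ as absolutely continuous with respect to the invariant probability measure $\nu_j$ on $G(d,j)$, with Radon--Nikodym density equal to the $j$-th projection function $L\mapsto V_j(K|L)$ respectively $L\mapsto V_j(K'|L)$. Equality of the measures therefore forces these densities to coincide $\nu_j$-almost everywhere on $G(d,j)$. Since the map $L\mapsto V_j(K|L)$ is continuous on the compact Grassmannian $G(d,j)$ — a consequence of the continuity of the orthogonal projection $(K,L)\mapsto K|L$ and of $V_j$ on $\mathcal{K}^d$ — the equality in fact holds for every $L\in G(d,j)$.

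Second, I invoke the classical fact that a centrally symmetric convex body of dimension strictly greater than $j$ is uniquely determined inside $\mathcal{K}^d_s$ by its $j$-th projection function $L\mapsto V_j(K|L)$ on $G(d,j)$. For $j=d-1$ this is Aleksandrov's projection theorem; for general $j\in\{1,\dots,d-1\}$ the same conclusion follows by representing $V_j(K|L)$ as a cosine-type integral transform of the $j$-th area measure $\Psi_j(K,\cdot)$, which is even precisely because $K=-K$, and then exploiting the injectivity of that transform on even measures (all standard and collected in~\cite{S}). Combined with step one, this immediately yields $K=K'$.

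The only non-routine step is the second one: the injectivity of the underlying cosine transform on even measures on $S^{d-1}$ (or, equivalently, on even Grassmannian distributions) is the hard ingredient, although it is by now a textbook result. The two hypotheses in the corollary enter exactly here. Central symmetry cannot be dropped because projection functions see only the even part of the area measure, so in general one can only recover the central symmetrization $\tfrac12(K+(-K))$; under $K=-K$ this symmetrization equals $K$. The dimensional assumption $\dim K,\dim K'>j$ guarantees non-degeneracy of the projection function (otherwise $V_j(K|L)\equiv 0$) and is required so that the injectivity argument applies without trivialities.
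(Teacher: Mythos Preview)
Your argument is correct and is exactly the route the paper intends: Theorem~\ref{grassmann} shows that $\gamma_j(K,\cdot)$ has continuous density $V_j(K|\cdot)$ with respect to $\nu_j$, so equality of the Grassmann measures gives equality of the projection functions, and the classical uniqueness theorem for $j$-th projection functions of centrally symmetric bodies (under the dimensional condition) finishes the proof. One minor slip in your closing remarks: when $\dim K=j$ the projection function is not identically zero but equals $V_j(K)\,\langle L(K),L\rangle$, so uniqueness fails there not by degeneracy but because any symmetric body in $L(K)$ with the same $j$-volume yields the same projection function.
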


In the proof of Theorem \ref{grassmann}, we will use the connection between Grassmann measures and even valuations. We recall, that
a real-valued functional $\varphi$ on ${\mathcal K}^d$ is called a {\it valuation}, if it is additive in the sense that
$$
\varphi (K\cup M) + \varphi (K\cap M) = \varphi (K) + \varphi (M) ,
$$
whenever $K,M$ and $K\cup M$ lie in ${\mathcal K}^d$. In the following, all valuations are assumed to be translation invariant and continuous (w.r.t. the Hausdorff metric). Let $\bf Val$ be the vector space of all these valuations and let ${\bf Val}^+$ (resp. ${\bf Val}^-$) be the subspace of {\it even} (resp. {\it odd}) valuations. For $j\in\{0,...,d\}$, a  valuation $\varphi\in{\bf Val}$ is {\it $j$-homogeneous}, if
$$
\varphi (\alpha K) = \alpha^j \varphi(K),\quad K\in{\mathcal K}^d, \alpha\ge 0.$$
McMullen \cite{McM77,McM93} has shown that there is a direct decomposition
\begin{equation}\label{val}
 {\bf Val}= \bigoplus_{j=0}^{d} {\bf Val}_j= \bigoplus_{j=0}^{d} ({\bf Val}_j^+\oplus {\bf Val}_j^-)
\end{equation}
into the spaces of (even and odd) $j$-homogeneous valuations (which are again translation invariant and continuous). Here, the cases $j=0$ and $j=d$ are simple (the valuations are constants, respectively constant multiples of the volume $V_d$), therefore we concentrate on $j\in\{1,\dots,d-1\}$, in the following.   For $j=d-1$, McMullen \cite{McM80} has shown that $\varphi\in {{\bf Val}}_{d-1}$, if and only if
\begin{equation}\label{(d-1)case}
\varphi (K) = \int_{S^{d-1}} f(u) \Psi_{d-1} (K,du),\quad K\in{\mathcal K}^d,
\end{equation}
for some continuous function $f= f_\varphi$ on the unit sphere $S^{d-1}$ which is uniquely determined, up to a linear function.  It follows that $\varphi$ is even, if $f$ is even, and odd, if $f$ is odd. For $1\le j\le d-2$, a similar description of ${{\bf Val}}_j$ is not possible, at least not with area measures. For the subclass ${{\bf Val}}_j^{\infty,+} \subset {{\bf Val}}_j^+$ of smooth valuations we have the following result.

\begin{theorem}\label{charact-1} For a valuation $\varphi\in {\bf Val}_j^{\infty,+}$, $j\in\{1,\dots,d-2\}$, there is a $C^\infty$-function $f$ on $G(d,j)$ such that
\begin{align*}
\varphi (K) &= \int_{F(d,j)} f(u^\perp\cap L) \psi_j(K,d(u,L))\\
&=\int_{G(d,j)} f(L) \gamma_j(K,dL) = \int_{G(d,j)} f(L) V_j(K|L)\nu_j(dL)
\end{align*}
for all $K\in{\mathcal K}^d$.
\end{theorem}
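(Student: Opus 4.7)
The plan is to reduce the three representations of $\varphi$ to one Crofton-type identity and then invoke Alesker's representation theorem for smooth even valuations.

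First I would show that the three integrals agree for any bounded Borel $f$. By construction, $\gamma_j(K,\cdot)$ is the image of $\psi_j(K,\cdot)$ under $(u,L)\mapsto u^\perp\cap L$, so the change-of-variables formula gives
\[
\int_{F(d,j+1)} f(u^\perp\cap L)\,\psi_j(K,d(u,L)) = \int_{G(d,j)} f(L')\,\gamma_j(K,dL'),
\]
which is the first equality. The second equality follows by feeding $f$ into Theorem \ref{grassmann}. Thus the theorem reduces to producing a $C^\infty$-function $f$ on $G(d,j)$ with
\[
\varphi(K) = \int_{G(d,j)} f(L)\,V_j(K|L)\,\nu_j(dL),\qquad K\in\mathcal{K}^d.
\]

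For this I would pass through the Klain embedding. Recall that an even $j$-homogeneous valuation $\varphi$ is determined by its Klain function $\mathrm{Kl}_\varphi\in C(G(d,j))$, characterized by $\varphi(K)=\mathrm{Kl}_\varphi(L)\,V_j(K)$ whenever $K\subset L\in G(d,j)$, and smoothness of $\varphi$ forces $\mathrm{Kl}_\varphi$ to lie in $C^\infty(G(d,j))$. For the candidate $\varphi_f(K):=\int f(L)\,V_j(K|L)\,\nu_j(dL)$, a short computation using the identity $V_j(K|L)=|\det(\mathrm{pr}_L|_{L_0})|\,V_j(K)$ for $K\subset L_0$ identifies $\mathrm{Kl}_{\varphi_f}$ with the classical cosine transform $\mathcal{C}f$ of $f$ on $G(d,j)$.

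The decisive input, and the hard part, is Alesker's theorem (resting on the Alesker--Bernstein irreducibility theorem) asserting that the smooth cosine transform on $G(d,j)$ surjects onto the space of Klain functions of valuations in $\mathbf{Val}_j^{\infty,+}$. Applying this surjectivity to the smooth Klain function $\mathrm{Kl}_\varphi$ will yield $f\in C^\infty(G(d,j))$ with $\mathcal{C}f=\mathrm{Kl}_\varphi=\mathrm{Kl}_{\varphi_f}$, and Klain injectivity then forces $\varphi=\varphi_f$, as required. The remaining ingredients---the image-measure change of variables, the Klain function of a projection-volume valuation, and Theorem \ref{grassmann}---are routine by comparison.
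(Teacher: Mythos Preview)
Your proposal is correct and matches the paper's approach: both reduce to showing $\mathbf{K}_j\varphi=\mathbf{C}_j f$ for some smooth $f$ via the Alesker--Bernstein range theorem, then use Klain injectivity together with Theorem~\ref{grassmann} to obtain all three representations. The only organizational difference is that the paper first establishes the equivalence $\varphi(K)=\int f\,d\gamma_j(K,\cdot)\Leftrightarrow \mathbf{K}_j\varphi=\mathbf{C}_j f$ independently of Theorem~\ref{grassmann} (via projection generating measures of generalized zonoids, Theorem~\ref{complrec}), and only afterwards remarks that your direct route through Theorem~\ref{grassmann} and the Crofton-measure computation of the Klain function gives the same conclusion.
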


For $\varphi\in {\bf Val}_j^{\infty,+}$, the representation
\begin{equation}\label{alesker}
\varphi (K) = \int_{G(d,j)} f(L) V_j(K|L)\nu_j(dL)
\end{equation}
with a smooth function $f$ follows from Alesker's irreducibility theorem and is contained in Alesker's work (see \cite[p. 73]{A1}). Here, we obtain the result through the connection with Grassmann measures.
We remark that, for $j=d-1$, the theorem also holds (with the above-mentioned interpretation of $\gamma_{d-1}(K,\cdot)$), but reduces to (the symmetric and smooth version of) McMullen's result \eqref{(d-1)case}. For $1<j< d-1$, the function $f$ is no longer unique, as we shall also see.

In the course of explaining the connection between Theorem \ref{grassmann} and Theorem \ref{charact-1}, we come across a third topic which we want to discuss, namely projection averages. As a generalization of the classical Cauchy-Kubota formula for intrinsic volumes (see \cite{S} or \cite{SW}), Schneider \cite{Sch75} proved a projection formula for area measures,
\begin{equation}\label{projaver}
\int_{G(d,k)}\Psi'_j(K|E,A\cap E)\nu_{k}(dE) = c_{d,k-j}^{k,d-j} \Psi_j(K,A)
\end{equation}
for $1\le j<k\le d-1$, $K\in{\mathcal K}^d$ with $\dim K\ge k$ and all Borel sets $A\subset S^{d-1}$. Here, $\Psi'_j(K|E,\cdot)$ is the area measure of $K|E$ calculated in the subspace $E$ (and hence a measure on $S^{d-1}\cap E$) and $c_{d,k-j}^{k,d-j}$ is a dimensional constant (see Section 2). Of course, this implies that the projection average
$$
\int_{G(d,k)}\Psi'_j(K|E,\cdot\cap E)\nu_{k}(dE)
$$
determines $K$ uniquely (up to translations and under the mentioned dimensional restriction). If we consider the corresponding integral
$$
\int_{G(d,k)}\Psi_j(K|E,\cdot)\nu_{k}(dE) ,
$$
where the $j$th area measure of the projection $K|E$ is averaged (as a measure on $S^{d-1}$), the resulting measure  is homogeneous of degree $j$ in $K$, but is not proportional to the area measure $\Psi_j(K,\cdot)$ anymore (in fact, it is not locally defined). Thus the question arises whether this projection average also determines $K$. We obtain a positive result for centrally symmetric bodies.

\begin{theorem}\label{projaver-1} Let $K,K'\in{\mathcal K}^d_s$ with $\dim K>j$, $\dim K'>j$, $j\in\{ 1,\dots ,d-2\}$. Then, 
$$
\int_{G(d,k)}\Psi_j(K|E,\cdot)\nu_{k}(dE) = \int_{G(d,k)}\Psi_j(K'|E,\cdot)\nu_{k}(dE) ,
$$ 
for $j<k\le d-1$, implies $K=K'$.
\end{theorem}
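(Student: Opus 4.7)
The plan is to reduce the statement to Corollary~\ref{unique-1} by showing that the projection average determines the Grassmann measure $\gamma_j(K,\cdot)$. Write
$$
\mu(K,\cdot):=\int_{G(d,k)}\Psi_j(K|E,\cdot)\,\nu_k(dE),
$$
so that the hypothesis reads $\mu(K,\cdot)=\mu(K',\cdot)$. For every even $g\in C^\infty(S^{d-1})$, I consider the functional
$$
\varphi_g(K):=\int_{S^{d-1}}g(u)\,\mu(K,du)=\int_{G(d,k)}\int_{S^{d-1}}g(u)\,\Psi_j(K|E,du)\,\nu_k(dE).
$$
Since $K\mapsto K|E$ is a continuous, translation-invariant valuation and $M\mapsto\int g(u)\,\Psi_j(M,du)$ is a continuous, translation-invariant, $j$-homogeneous valuation on ${\mathcal K}^d$, one has $\varphi_g\in{\bf Val}_j$. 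The identities $(-K)|E=-(K|E)$ and $\Psi_j(-M,\cdot)=\Psi_j(M,-\cdot)$, together with evenness of $g$, force $\varphi_g\in{\bf Val}_j^+$. Smoothness of $g$ and averaging against the $SO(d)$-invariant measure $\nu_k$ make $\varphi_g$ smooth in Alesker's sense, so $\varphi_g\in{\bf Val}_j^{\infty,+}$. Theorem~\ref{charact-1} then yields $f_g\in C^\infty(G(d,j))$ with $\varphi_g(K)=\int_{G(d,j)}f_g(L)\,\gamma_j(K,dL)$, and the hypothesis gives
$$
\int_{G(d,j)}f_g(L)\,\gamma_j(K,dL)=\int_{G(d,j)}f_g(L)\,\gamma_j(K',dL)
$$
for every even $g\in C^\infty(S^{d-1})$.

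The heart of the argument is to show that $\mathcal F:=\{f_g:g\in C^\infty_{\mathrm{even}}(S^{d-1})\}$ is dense in $C(G(d,j))$ under the uniform norm. To identify $f_g$ explicitly I would exploit the standard representation of the area measure of a lower-dimensional body: for $M=K|E\subset E$ of dimension $k$, the extrinsic measure $\Psi_j(M,\cdot)$ on $S^{d-1}$ is an angular lift of the intrinsic $j$th area measure $\Psi'_j(M,\cdot)$ on $S^{d-1}\cap E$, obtained by integration over the transverse sphere $S^{d-1}\cap E^\perp$ against an explicit angular density. Substituting this lift into $\varphi_g$, applying Schneider's projection formula~\eqref{projaver} to the $\nu_k$-average of $\Psi'_j(K|E,\cdot)$, and invoking Theorem~\ref{grassmann}, realizes $f_g$ as an explicit Grassmannian cosine-type integral transform of $g$; the density of $\mathcal F$ then reduces to injectivity of this transform on even smooth functions, a standard fact from integral geometry on spheres and Grassmannians.

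With density in hand, the displayed equation extends by approximation to every $f\in C(G(d,j))$, so $\gamma_j(K,\cdot)=\gamma_j(K',\cdot)$, and Corollary~\ref{unique-1} yields $K=K'$. The principal obstacle is the explicit identification of $f_g$ together with the proof of density of $\mathcal F$; verifying the Alesker smoothness of $\varphi_g$ required for the application of Theorem~\ref{charact-1} is a secondary, more routine technical point.
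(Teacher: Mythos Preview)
Your plan has a genuine gap at the density step, and for $2\le j\le d-2$ the claim is actually false. If you carry out the computation you sketch (lifting $\Psi_j(K|E,\cdot)$ from $\Psi'_j(K|E,\cdot)$, averaging over $E$, and comparing with Theorem~\ref{grassmann}), the outcome is
\[
f_g(L)=c\,(R_{1,d-j}g)(L^{\perp}),
\]
the average of $g$ over the unit sphere in $L^{\perp}$. Thus $\mathcal F$ is the range of a Radon transform from even functions on $S^{d-1}$ (equivalently $C(G(d,1))$, a manifold of dimension $d-1$) into $C(G(d,j))$ (a manifold of dimension $j(d-j)$). For $2\le j\le d-2$ one has $j(d-j)>d-1$, the adjoint $R_{d-j,1}$ on measures is not injective, and $\mathcal F$ cannot be dense. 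Adding elements of $\ker{\bf C}_j$ to $f_g$ does not help either: by Theorem~\ref{grassmann} any such $h$ satisfies $\int h\,d\gamma_j(K,\cdot)=\int h(L)V_j(K|L)\,\nu_j(dL)=0$, since this valuation has Klain function ${\bf C}_jh=0$. So from $\varphi_g(K)=\varphi_g(K')$ for all $g$ you cannot conclude $\gamma_j(K,\cdot)=\gamma_j(K',\cdot)$ in general; what you actually obtain is only $R_{d-j,1}\bigl[V_j(K|\cdot^{\perp})\bigr]=R_{d-j,1}\bigl[V_j(K'|\cdot^{\perp})\bigr]$ on $S^{d-1}$.

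The paper's proof bypasses $\gamma_j$ entirely. It first establishes (Corollary~\ref{projaver-0}) that for symmetric $K$ the projection average has an explicit density,
\[
\int_{G(d,k)}\Psi_j(K|E,A)\,\nu_k(dE)=c^{k-1,d-k}_{d-1,0}\int_A V_j(K|u^{\perp})\,\omega(du),
\]
so the hypothesis yields $V_j(K|u^{\perp})=V_j(K'|u^{\perp})$ for all $u$. This function is, up to a constant, the spherical cosine transform of $\Psi_j(K,\cdot)$; injectivity of the cosine transform on even measures then gives $\Psi_j(K,\cdot)=\Psi_j(K',\cdot)$ and hence $K=K'$. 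The point you are missing is that one does not need to recover $V_j(K|L)$ for all $L\in G(d,j)$: the smaller amount of information $V_j(K|u^{\perp})$, $u\in S^{d-1}$, already determines symmetric $K$, precisely because projection functions lie in a very restricted class. Your route, once repaired, would have to invoke exactly this fact, at which point it collapses into the paper's argument. (As a secondary matter, your justification of $\varphi_g\in{\bf Val}_j^{\infty,+}$ is also incomplete: smoothness is with respect to the $GL(d)$-action, and projections $K\mapsto K|E$ do not transform well under $GL(d)$, so $SO(d)$-averaging alone does not suffice. Smoothness does follow once one knows $\varphi_g$ has smooth Crofton density $f_g$, but that again presupposes the computation above.)
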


Surprisingly, the corresponding result is wrong, in general, if the symmetry of the bodies $K,K'$ is skipped. We shall give a corresponding example in the case $j=1$.

Finally, we discuss the connection between the Grassmann measures and touching measures, as they were treated in \cite[Section 8.6]{SW}. As a further result, we obtain that a centrally symmetric body $K$ of dimension $> j$ is uniquely determined (up to translation) by its $(d-j-1)$st {\it direction measure} $\nu_{d-j-1}(K,\cdot)$, $j\in\{ 1,\dots, d-2\}$. The latter is the image of the touching measure $\mu_{d-j-1}(K,\cdot)$ defined in \cite[p. 358]{SW}, if the affine $(d-j-1)$-dimensional subspaces $E$ touching $K$ are mapped to their direction spaces $L(E)\in G(d,d-j-1)$.

\begin{theorem}\label{unique-2} 
 Let $K,K'\in{\mathcal K}^d_s$ with $\dim K>j$, $\dim K'>j$, $j\in\{ 1,\dots ,d-2\}$. Then, $\nu_{d-j-1}(K,\cdot )=\nu_{d-j-1}(K',\cdot)$ implies $K=K'$.
\end{theorem}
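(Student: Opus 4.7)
The plan is to reduce Theorem \ref{unique-2} to Corollary \ref{unique-1} by identifying the direction measure with (up to a positive constant) the absolutely continuous measure $V_j(K|L^\bot)\,\nu_{d-j-1}(dL)$ on $G(d,d-j-1)$, and then transferring this information from $G(d,j+1)$ to $G(d,j)$ via a Radon-type transform between adjacent Grassmannians.

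First, I would unpack the definition of the touching measure from \cite[Section 8.6]{SW}. Parametrizing an affine $(d-j-1)$-flat as $E=x+L$ with $L\in G(d,d-j-1)$ and $x\in L^\bot$, the condition that $E$ touches $K$ becomes $x\in\bd(K|L^\bot)$, so the translational part integrates to the surface area of the $(j+1)$-dimensional body $K|L^\bot$ in $L^\bot$, which equals $2V_j(K|L^\bot)$. Projecting to directions yields
\begin{equation*}
\nu_{d-j-1}(K,A)=c\int_A V_j(K|L^\bot)\,\nu_{d-j-1}(dL),\qquad A\subset G(d,d-j-1),
\end{equation*}
with an explicit positive constant $c$. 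By continuity of the projection function $L^\bot\mapsto V_j(K|L^\bot)$ on $G(d,j+1)$, the hypothesis $\nu_{d-j-1}(K,\cdot)=\nu_{d-j-1}(K',\cdot)$ therefore forces $V_j(K|L')=V_j(K'|L')$ for \emph{every} $L'\in G(d,j+1)$.

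Next, within each $L'\cong\R^{j+1}$, the Cauchy-Kubota formula applied to the body $K|L'$ gives
\begin{equation*}
V_j(K|L')=c_j\int_{G(L',j)}V_j(K|M)\,\nu_j^{L'}(dM),
\end{equation*}
where $\nu_j^{L'}$ is the invariant probability measure on the Grassmannian of $j$-subspaces of $L'$ and $V_j(K|L'|M)=V_j(K|M)$ for $M\subset L'$. Setting $f(M):=V_j(K|M)-V_j(K'|M)$ on $G(d,j)$, the previous equality rewrites as $Tf\equiv 0$, where $T\colon C(G(d,j))\to C(G(d,j+1))$ is the averaging Radon transform $(Tf)(L')=\int_{G(L',j)}f(M)\,\nu_j^{L'}(dM)$ between adjacent Grassmannians.

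The key and only nontrivial step is then the injectivity of $T$. This is a classical fact on Radon transforms between Grassmannians (see, e.g., the results of Gelfand-Graev-Rosu and of Grinberg); equivalently, the dual transform $(T^*g)(M)=\int_{L'\supset M}g(L')\,\nu^M(dL')$ has dense range in $C(G(d,j))$, which by Fubini lets one deduce from $Tf\equiv 0$ the equality $\int h(M)\,V_j(K|M)\,\nu_j(dM)=\int h(M)\,V_j(K'|M)\,\nu_j(dM)$ for every $h\in C(G(d,j))$. Together with Theorem \ref{grassmann} this gives $\gamma_j(K,\cdot)=\gamma_j(K',\cdot)$, and Corollary \ref{unique-1} then yields $K=K'$. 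Apart from this Radon-transform injectivity, the remaining work is mechanical: unpacking the density formula of Step~1 from the conventions of \cite{SW} and tracking the constant $c_j$ in Cauchy-Kubota in $\R^{j+1}$.
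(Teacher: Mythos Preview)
Your reduction to the density formula and the conclusion $V_j(K|L)=V_j(K'|L)$ for all $L\in G(d,j+1)$ is exactly what the paper does (this is equation \eqref{Radontype}). The gap is in the final step. The Radon transform $T=R_{j,j+1}\colon C(G(d,j))\to C(G(d,j+1))$ is \emph{not} injective for all $j\in\{1,\dots,d-2\}$: by the Grinberg-type criterion (equivalently, by comparing the $O(d)$-types that occur in $L^2(G(d,j))$ and $L^2(G(d,j+1))$), injectivity of $R_{j,j+1}$ holds iff $2j+1\le d$. For $j>(d-1)/2$ --- in particular for $j=d-2$ whenever $d\ge 4$ --- the transform has a nontrivial kernel (depth-$2$ components on $G(d,d-2)\cong G(d,2)$ are annihilated), so your appeal to ``classical injectivity'' and the equivalent dense-range claim for $T^*$ both fail in that range.

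The paper's route avoids this entirely by going \emph{up} rather than down. From $V_j(K|L)=V_j(K'|L)$ on $G(d,j+1)$, apply Cauchy--Kubota inside each hyperplane $u^\perp$ (here $j+1\le d-1$, so this is available):
\[
V_j(K|u^\perp)=c\int_{G(u^\perp,j+1)}V_j(K|L)\,\nu_{j+1}^{u^\perp}(dL),
\]
which yields $V_j(K|u^\perp)=V_j(K'|u^\perp)$ for all $u\in S^{d-1}$. As noted at the end of Section~6, $u\mapsto V_j(K|u^\perp)$ is (up to a constant) the spherical cosine transform of $\Psi_j(K,\cdot)$; injectivity of that transform on even measures and Aleksandrov's uniqueness for area measures then give $K=K'$. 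Replacing your downward Radon-inversion step by this upward Kubota step repairs the argument completely.
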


The setup of the paper is as follows. 
In the next section, we collect some further basic facts from convex geometry which we will use, including a description of various flag measures. We then discuss the Grassmann measures and collect some of their simple properties. In Section 3, we collect more information about valuations which is used later on. In the subsequent sections we give the proofs of Theorems \ref{grassmann}, \ref{charact-1}, \ref{projaver-1} and \ref{unique-2}.

\section{Facts from convex geometry}

Measures on flag manifolds as descriptors of convex bodies are of more recent origin. They can be introduced by a local Steiner formula in the affine Grassmannian $A(d,k)$ which defines a whole series of flag measures or by projection averages of area measures. There is also a direct approach which uses measure geometric tools and defines flag measures as integrals with respect to the Hausdorff measure on the generalized normal bundle of the body. We refer to \cite{HTW}, for a survey on flag measures which contains also historical remarks and gives further references. In the following, we concentrate on the measure $\psi_j(K,\cdot )$, for $K\in{\mathcal K}^d$ and $j\in\{ 1,\dots, d-1\}$, which we define as a measure on the flag space $F(d,j+1)$ by a projection mean. Here, for $1\le k\le d$, 
$$
F(d,k) = \{ (u,E) : E\in G(d,k), u\in S^{d-1}\cap E\} .
$$
Then,
\begin{equation}\label{flagprojformula}
\psi_j (K,\cdot) = \int_{G(d,j+1)}\int_{S^{d-1}\cap E} {\bf 1}\{ (u,E)\in\cdot\} \Psi'_j(K|E,du) \nu_{j+1}(dE),
\end{equation}
where $\Psi_{j}' (K|E,\cdot)$ denotes the area measure of $K|E$ computed in $E$ and $\nu_{j+1}$ is the rotation invariant probability measure on $G(d,j+1)$.  
For the necessary measurability properties needed here and in the following, we refer to \cite{Hind}. As a variant of $F(d,k)$ one can use
$$
F^\bot(d,d-k) = \{ (u,E) : E\in G(d,d-k), u\in S^{d-1}\cap E^\bot\} ,
$$
which is obviously homeomorphic to $F(d,k)$ under the orthogonality map $(u,L)$ $\mapsto (u,L^\perp)$ (both spaces, $F(d,k)$ and $F^\bot(d,d-k)$ carry natural topologies). Thus, $\psi_j (K,\cdot)$  can also be interpreted as a measure on $F^\bot(d,d-j-1)$. Another isomorphic representation results, if we consider the image of $\psi_j (K,\cdot)$ under the bijection $(u,E)\mapsto (u,E^\bot\vee u)$, where $E^\bot\vee u$ denotes the linear hull of $E^\bot\cup\{u\}$. The resulting measure sits on $F(d,d-j)$ and is the measure considered in \cite[eq. (3.1)]{HHW} (first defined for polytopes by a sum over all $j$-faces and extended to arbitrary $K\in{\mathcal K}^d$ by continuity). This latter measure was also used in \cite{GHHRW}.  In \cite{HTW}, as well as in \cite{Hind}, a series of flag measures $S^{(k)}_m(K,\cdot)$ on $F^\perp(d,k)$ was introduced by a local Steiner formula (see also \cite[p. 5]{GHHRW}). Here, the connection to the measure $\psi_j(K,\cdot)$ is given by
\begin{align}\label{connect}                                                  
\int_{F(d,j+1)}&f(u,L)\psi_j(K,d(u,L))  \cr 
&= c_{d,j} \int_{F^\perp(d,d-j-1)} f(v,M^\bot)S^{(d-j-1)}_{j}(K,d(v,M)) 
\end{align}
(this is essentially formula (2.1) in \cite{GHHRW}, where also the constant $c_{d,j}$ is given). 
In \cite{HRW}, a flag measure $\Omega_j(K;\cdot)$ on $F^\perp(d,d-1-j)$ was used in a measure geometric context, which satisfies
$$
\Omega_j(K;\cdot) = \tilde c_{d,j} S^{(d-1-j)}_j(K,\cdot)
$$
with a constant $\tilde c_{d,j}$ given explicitly in \cite[eq. (2.4)]{GHHRW}.

The image measure of $\psi_j(K,\cdot )$ under the mapping $(u,L)\mapsto u$ is proportional to the area measure $\Psi_j(K,\cdot )$, as follows from the local projection formula \eqref{projaver},
\begin{equation*}
\int_{G(d,j+1)}\Psi_j'(K|L,L\cap\cdot)\nu_{j+1}(dL) = c^{j+1,d-j}_{d,1} \Psi_j(K,\cdot)
\end{equation*}
for $j=1,\dots, d-2$. Here
$$
c^{k,m}_{l,n}= \frac{k!\kappa_k m!\kappa_m}{l!\kappa_l n!\kappa_n},$$
where $\kappa_i$ is the ($i$-dimensional) volume of the unit ball in $\mathbb R^i$. 
This shows, that the flag measure $\psi_j(K,\cdot )$ determines bodies $K$ of dimension $\ge j+1$ uniquely (up to a translation). Since $\psi_{d-1} (K,\cdot)=\Psi_{d-1}(K,\cdot)$ (if we identify $(u,\R^d)$ and $u$), we are mostly interested in the case $j\le d-2$, in the following.

For $1\le j\le d-2$, we now define the $j$th {\it Grassmann measure} $\gamma_j(K,\cdot)$ on $G(d,j)$ as the image measure of $\psi_j(K,\cdot )$ under the mapping $(u,L)\mapsto u^\bot\cap L$. The Grassmann measures inherit some properties from the corresponding flag measures. Namely,  $\gamma_j(K,\cdot)$ is translation invariant and rotation covariant (that is, $\gamma_j(\vartheta K,A) = \gamma_j (K,\vartheta^{-1}A)$ for $K\in{\mathcal K}^d, A\subset G(d,j)$ a Borel set and $\vartheta\in SO(d)$). The mapping $K\mapsto \gamma_j(K,\cdot)$ is additive (hence a valuation), homogeneous of degree $j$ and continuous (w.r.t. the weak* topology for measures and the Hausdorff metric on ${\mathcal K}^d$). 
There is also a (local) Steiner-type formula for Grassmann measures. In fact, $\gamma_j(K+tB^d,\cdot)$ is a polynomial in $t\ge 0$ and the coefficients are measures on $G(d,j)$ indexed by two parameters. If $K$ has dimension $<j$, then $\gamma_j(K,\cdot)=0$. If $\dim K=j$, then $\gamma_j(K,\cdot) =V_j(K)\delta_{L(K)}$, where $V_j(K)$ denotes the $j$th intrinsic volume of $K$ and $\delta_{L(K)}$ is the Dirac measure concentrated on the linear space $L(K)\in G(d,j)$ parallel to the affine hull of $K$ .

For $K\in{\mathcal K}^d_s$ and a Borel set $A\subset G(d,j)$, \eqref{flagprojformula} implies
\begin{align}\label{Grassmeas}
\gamma_j(K,A) &= \int_{G(d,j+1)}\int_{S^{d-1}\cap E} {\bf 1}\{ u^\perp\cap E\in A\} \Psi'_j(K|E,du) \nu_{j+1}(dE)\nonumber\\
&= \int_{G(d,j+1)} \Psi'_j(K|E,A_E) \nu_{j+1}(dE) \nonumber\\
&= \int_{G(d,j+1)}\tilde \Psi_{j,E}(K|E,A\cap E) \nu_{j+1}(dE),
\end{align}
where
$$
A_E = \{ u\in E\cap S^{d-1} : u^\perp\cap E\in A\} ,
$$
$$
A\cap E = \{ L\in G(d,j) : L\in A, L\subset E\} ,
$$
and where $\tilde\Psi_{j,E}(K|E,\cdot)$ is the image measure on $G(d,j)$ of $\Psi'_j(K|E,\cdot)$ under the mapping $u\mapsto u^\perp\cap E$. The representation \eqref{Grassmeas}  resembles the projection formula \eqref{projaver}, but the latter is not directly applicable here. 

It might appear natural to consider, as an alternative to $\gamma_j(K,\cdot)$, the image $\tilde\gamma_j(K,\cdot)$ of $\psi_j(K,\cdot)$ under the projection $(u,L)\mapsto L$. This image is a measure on $G(d,j+1)$. As we shall see in Section 7, it is directly related to the touching measure $\mu_{d-j-1}(K,\cdot)$ and is connected to $\gamma_j(K,\cdot)$ by a Radon transform. For the applications to even valuations and to projection averages, the measure $\gamma_j(K,\cdot)$ seems to be more appropriate.

There is another measure on the Grassmannian $G(d,j)$ which describes centrally symmetric bodies $K$ from a dense subclass of ${\mathcal K}^d_s$ (the generalized zonoids). This is the $j$th {\it projection generating measure} $\rho_{(j)}(K,\cdot)$ which was first mentioned (with this name)  in \cite[Section 6]{GW93}, but appeared in equivalent form in various earlier formulas for zonoids and generalized zonoids (see also \cite[p. 308]{S}) . The latter is a finite (signed) measure on $G(d,j)$ defined as the image of the measure
$$
\frac{2^j}{j!} \int_{(\cdot)}\cdots\int_{(\cdot)}D_j(u_1,\dots ,u_j)\rho(K,du_1)\cdots\rho(K,du_j) 
$$
on $(S^{d-1})^j$ under the mapping $(u_1,\dots, u_j)\mapsto \lin \{ u_1,\dots ,u_j\}$. Here, $\rho(K,\cdot)$ is the {\it generating measure} of $K$, $\lin\{u_1,\dots ,u_j\}$ is the linear subspace generated by $u_1,\dots ,u_j$ and $D_j(u_1,\dots ,u_j)$ is the absolute value of the determinant of $u_1,\dots ,u_j$ in this subspace (this value is interpreted as $0$, if $u_1,\dots ,u_j$ are not linearly independent). The name for $\rho_{(j)}(K,\cdot)$ comes from the fact that
\begin{equation}\label{projgenmeas}
V_j(K|L) = \int_{G(d,j)} \langle L,M\rangle \rho_{(j)}(K,dM),\quad L\in G(d,j),
\end{equation}
for a generalized zonoid $K$ (see \cite[eq. (6.1)]{GW93}). Here, $\langle L,M\rangle$ is the absolute value of the determinant of the orthogonal projection of $L$ onto $M$. The measure $\rho_{(d-1)}(K,\cdot)$ is (up to the identification $u^\perp
\longleftrightarrow \{u,-u\}$) proportional to the area measure $\Psi_{d-1}(K,\cdot)$, therefore \eqref{projgenmeas} holds in this case $j=d-1$ for all $K\in{\mathcal K}^d_s$. For $j \le d-2$ however, there is no continuous extension of \eqref{projgenmeas} to all bodies $K\in{\mathcal K}^d_s$ with a measure. One has to use distributions on $G(d,j)$.

\section{More on valuations}

Besides general valuations we also consider smooth ones. Here a valuation $\varphi\in {\bf Val}$ is called {\it smooth}, if the mapping $\phi : GL(d)\to {\bf Val}, A\mapsto A\varphi$, is infinitely differentiable (see \cite[Sec.~6.5]{S}). The subspaces ${{\bf Val}}_j^{\infty}$, ${{\bf Val}}_j^{\infty,+}$, ${{\bf Val}}_j^{\infty,-}$ of smooth valuations in ${{\bf Val}}_j$, ${{\bf Val}}_j^+$, ${{\bf Val}}_j^-$ are dense. 

For $\varphi\in {{\bf Val}}_j$, the {\it flag function} ${\bf I}_j\varphi$ of $\varphi$ is the (uniquely determined) centred continuous function on $F(d,j+1)$, defined by 
\begin{equation}\label{flagfunction}
\varphi (K|L) = \int_{S^{d-1}\cap L} {\bf I}_j\varphi(u,L) \Psi_{j}' (K|L,du)
\end{equation}
for $(u,L)\in F(d,j+1)$. Here, we have used McMullen's result \eqref{(d-1)case} in $L$.  The mapping 
${\bf I}_j : \varphi\mapsto {\bf I}_j \varphi$ from ${\bf Val}_j$ into the Banach space $C_o(F(d,j+1))$ of centred continuous functions on $F(d,j+1)$ is linear, continuous and injective (see, e.g., \cite{GHW2}). The injectivity  implies that $\varphi\in{\bf Val}_j$ is even (odd), if and only if ${\bf I}_j\varphi$ is even (odd). 
The restriction of ${\bf I}_j$ to even valuations corresponds to the {\it Klain embedding} and shows that 
\begin{equation}\label{Klain}
({\bf I}_j\varphi) (u,L) + ({\bf I}_j\varphi) (-u,L) = 2 ({\bf K}_j \varphi)(L\cap u^\bot),
\end{equation}
 where ${\bf K}_j \varphi$ is the Klain function of $\varphi$, a continuous function on $G(d,j)$. 

The following result gives a connection between the Grassmann measure and the Klain function.

\begin{theorem} \label{Grassmannmeas}
Let $K\in {\mathcal K}^d_s$ and $\varphi \in{\bf Val}_j^+$, $j\in\{ 1,\dots, d-2\}$. Then 
\begin{equation}\label{Grassmann-1}
\int_{G(d,j)} ({\bf K}_j\varphi)(M)\gamma_j(K,dM) = \int_{G(d,j+1)} \varphi (K|L)\nu_{j+1}(dL) .
\end{equation}
\end{theorem}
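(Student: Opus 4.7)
The plan is to transform the right-hand side into the left-hand side by unfolding the flag function, recognizing the emerging double integral as the flag measure $\psi_j(K,\cdot)$, symmetrizing in $u$ using the central symmetry of $K$, and then projecting via $(u,L)\mapsto L\cap u^\bot$ to land on the Grassmannian.

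More concretely, I would start from the right-hand side and apply the defining identity \eqref{flagfunction} of the flag function inside each subspace $L\in G(d,j+1)$ (in which $K|L$ is $j$-dimensional in a $(j+1)$-dimensional ambient space, so McMullen's theorem \eqref{(d-1)case} applies in $L$). This gives
\[
\int_{G(d,j+1)}\varphi(K|L)\,\nu_{j+1}(dL) = \int_{G(d,j+1)}\int_{S^{d-1}\cap L}({\bf I}_j\varphi)(u,L)\,\Psi'_j(K|L,du)\,\nu_{j+1}(dL).
\]
By the projection-mean definition \eqref{flagprojformula} of $\psi_j(K,\cdot)$, this double integral is precisely $\int_{F(d,j+1)}({\bf I}_j\varphi)(u,L)\,\psi_j(K,d(u,L))$.

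Next I would invoke the central symmetry of $K$. Since $K=-K$, we have $K|L=-(K|L)$ and hence $\Psi'_j(K|L,\cdot)$ is invariant under $u\mapsto -u$ in $S^{d-1}\cap L$ for every $L$; consequently $\psi_j(K,\cdot)$ is invariant under $(u,L)\mapsto(-u,L)$. This allows me to replace $({\bf I}_j\varphi)(u,L)$ by the average $\tfrac12[({\bf I}_j\varphi)(u,L)+({\bf I}_j\varphi)(-u,L)]$ in the integral against $\psi_j(K,\cdot)$, and then the Klain identity \eqref{Klain} turns this average into $({\bf K}_j\varphi)(L\cap u^\bot)$. Therefore
\[
\int_{F(d,j+1)}({\bf I}_j\varphi)(u,L)\,\psi_j(K,d(u,L)) = \int_{F(d,j+1)}({\bf K}_j\varphi)(L\cap u^\bot)\,\psi_j(K,d(u,L)).
\]
Finally, by the very definition of $\gamma_j(K,\cdot)$ as the image of $\psi_j(K,\cdot)$ under $(u,L)\mapsto u^\bot\cap L$, the right-hand side equals $\int_{G(d,j)}({\bf K}_j\varphi)(M)\,\gamma_j(K,dM)$, yielding \eqref{Grassmann-1}.

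The only delicate point is the symmetrization step: it is essential that $\psi_j(K,\cdot)$ be invariant under $u\mapsto -u$ so that one may freely symmetrize the integrand without changing the integral. This invariance is immediate from $K=-K$ and the covariance of area measures under reflection, so there is no real obstacle; the remaining ingredients (the projection-mean formula, the flag function identity, and the Klain relation) are exactly the tools assembled in Section~2 and Section~3.
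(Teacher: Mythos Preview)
Your proof is correct and follows essentially the same chain of equalities as the paper's proof (definition of $\gamma_j$, the Klain relation \eqref{Klain}, the projection-mean formula \eqref{flagprojformula}, and the flag function identity \eqref{flagfunction}), just run in the opposite direction. The only minor difference is in how the passage between ${\bf I}_j\varphi$ and ${\bf K}_j\varphi(\,\cdot\cap u^\perp)$ is justified: the paper uses that $\varphi\in{\bf Val}_j^+$ forces ${\bf I}_j\varphi$ itself to be even in $u$ (so ${\bf I}_j\varphi(u,L)={\bf K}_j\varphi(L\cap u^\perp)$ pointwise by \eqref{Klain}), whereas you instead use $K=-K$ to make $\psi_j(K,\cdot)$ even and then symmetrize the integrand---both routes are valid and yield the same identity.
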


\begin{proof} By definition of the Grassmann measure and equations \eqref{Klain}, \eqref{flagprojformula} and \eqref{flagfunction}, we have
\begin{align*}
\int_{G(d,j)} ({\bf K}_j\varphi)(M)&\gamma_j(K,dM)\\
&= \int_{F(d,j+1)} ({\bf K}_j\varphi)(u^\bot\cap L)\psi_j(K,d(u,L))\\
&= \int_{F(d,j+1)} ({\bf I}_j\varphi)(u,L)\psi_j(K,d(u,L))\\
&= \int_{G(d,j+1)} \int_{S^{d-1}\cap E}({\bf I}_j\varphi)(u,E)\Psi_j'(K|E,du)\nu_{j+1}(dE)\\
&= \int_{G(d,j+1)} \varphi(K|E)\nu_{j+1}(dE).
\end{align*}
\end{proof}

If a valuation $\varphi\in {\bf Val}_j^+$ satisfies
$$
\varphi (K) = \int_{G(d,j)} V_j(K|L)\mu (dL), \quad K\in{\mathcal K}^d_s ,
$$
with a finite (signed) Borel measure $\mu$ on $G(d,j)$, this measure is called a {\it Crofton measure} for $\varphi$ (see \cite{Bernig, Schuster}). The name arises from the fact that $\varphi$ is proportional to the Crofton-type integral
$$
\int_{A(d,d-j)} V_0(K\cap E) \tilde\mu (dE),
$$
where $\tilde\mu$ is the translation invariant measure on the affine Grassmannian $A(d,d-j)$ with direction measure $\mu^\perp$, the image measure of $\mu$ under $L\mapsto L^\perp$ (see \cite{SW}). Crofton measures play an important role in the geometry of Finsler spaces and also in the description of even Minkowski valuations $\Phi : {\mathcal K}^d_s\to {\mathcal K}^d_s$. Suppose $\varphi\in {\bf Val}_j^+$ admits a Crofton measures $\mu$. Then, for a $j$-dimensional body $K$ in a subspace $M\in G(d,j)$,
\begin{align*}
\varphi (K) &= ({\bf K}_j\varphi)(M) V_j(K)\\
&= \int_{G(d,j)} V_j(K|L)\mu (dL)\\
&=\int_{G(d,j)}  \langle L,M\rangle V_j(K)\mu (dL) ,
\end{align*}
hence
\begin{equation}\label{croftonmeas}
{\bf K}_j\varphi = {\bf C}_j \mu .
\end{equation}
In the opposite direction, if \eqref{croftonmeas} holds for some $\varphi\in {\bf Val}_j^+$ and a measure $\mu$ on $G(d,j)$, then $\mu$ is a Crofton measure for $\varphi$. This follows easily, if a valuation $\varphi'$ is defined by 
$$
\varphi' (K) = \int_{G(d,j)} V_j(K|L)\mu (dL)
$$
for $K\in{\mathcal K}^d.$ Then $\varphi'\in {\bf Val}_j^+$ and has Crofton measure $\mu$. Therefore, as we have just seen,
$$
{\bf K}_j\varphi' = {\bf C}_j \mu = {\bf K}_j\varphi,
$$
which implies $\varphi' = \varphi$.

\section{Proof of Theorem \ref{grassmann}}

The proof is based on the connection between the Grassmann measure $\gamma_j(K,\cdot)$ and the projection generating measure $\rho_{(j)}(K,\cdot)$. 
In \cite[Corollary 3 and Remark 4]{HTW} the flag measure $\psi_j(K,\cdot)$ of a generalized zonoid $K$ was expressed in terms of the generating measure $\rho(K,\cdot)$. Using this result (and the notions explained there), we get
\begin{align*}
\psi_j(K,\cdot)&= \frac{2^j}{j!} \int_{S^{d-1}}\cdots\int_{S^{d-1}}\int_{G(d,j+1)} D_j(u_1|L,\dots ,u_j|L)\\
&\quad\times{\bf 1}\{(T^L(\pi_L(u_1),\dots ,\pi_L(u_j)),L)\in\cdot\}\nu_{j+1}(dL) \rho(K,du_1)\cdots\rho(K,du_j)\\
&= \frac{2^j}{j!} \int_{G(d,j+1)}\int_{S^{d-1}}\cdots\int_{S^{d-1}} D_j(u_1|L,\dots ,u_j|L)\\
&\quad\times{\bf 1}\{(T^L(\pi_L(u_1),\dots ,\pi_L(u_j)),L)\in\cdot\} \rho(K,du_1)\cdots\rho(K,du_j)\nu_{j+1}(dL)\\
&= \frac{2^j}{j!} \int_{G(d,j+1)}\int_{S^{d-1}}\cdots\int_{S^{d-1}} D_j(u_1,\dots ,u_j)\langle \lin (u_1,\dots,u_j),L\rangle\\
&\quad\times{\bf 1}\{(T^L(\pi_L(u_1),\dots ,\pi_L(u_j)),L)\in\cdot\} \rho(K,du_1)\cdots\rho(K,du_j)\nu_{j+1}(dL),
\end{align*}
hence
\begin{align*}
\gamma_j(K,\cdot)&= \frac{2^j}{j!} \int_{G(d,j+1)}\int_{S^{d-1}}\cdots\int_{S^{d-1}} D_j(u_1,\dots ,u_j)\langle \lin (u_1,\dots,u_j),L\rangle\\
&\quad\times{\bf 1}\{\lin (u_1,\dots,u_j)|L\in\cdot\} \rho(K,du_1)\cdots\rho(K,du_j)\nu_{j+1}(dL)\\
&=\int_{G(d,j+1)}\int_{G(d,j)}{\bf 1}\{M|L\in\cdot\}\langle M,L\rangle \rho_{(j)}(K,dM)\nu_{j+1}(dL).
\end{align*}
Since $\langle M,L\rangle=\langle M,M|L\rangle$ and using Fubini and the fact that
$$
\int_{G(d,j+1)}{\bf 1}\{M|L\in\cdot\}\langle M,M|L\rangle \nu_{j+1}(dL) = \int_{G(d,j)}{\bf 1}\{N\in\cdot\}\langle M,N\rangle \nu_{j}(dN),$$
for fixed $M\in G(d,j)$ (here the condition $j\le d-2$ is essential), we arrive at
\begin{equation}\label{gm-pgm}
\gamma_j(K,\cdot) = \int_{G(d,j)}\left(\int_{G(d,j)} \langle M,N\rangle\rho_{(j)}(K,dM)\right){\bf 1}\{N\in\cdot\}\nu_{j}(dN).
\end{equation}

The inner integral is the cosine transform $[{\bf C}_j\rho_{(j)}(K,\cdot)](N)$ of $\rho_{(j)}(K,\cdot)$ on the Grassmannian $G(d,j)$. From Theorem 5.3.5 in \cite{S} and the subsequent remarks (on p. 306), we get that
$$
V_j(K|L) = [{\bf C}_j\rho_{(j)}(K,\cdot)](L) ,
$$
hence Theorem \ref{grassmann} is proved for generalized zonoids $K$. By approximation, it follows for arbitrary $K\in {\mathcal K}^d_s$.

In order to extend the result to bodies $K\in{\mathcal K}^d$, we define two valuations $\varphi_1, \varphi_2$ by
$$
\varphi_1(K) = \int_{G(d,j)} g(L) \gamma_j(K,dL)
$$
and
$$
\varphi_2(K) = \int_{G(d,j)} g(L) V_j(K|L) \nu_j(dL)
 $$
for $K\in {\mathcal K}^d$. Here $g$ is an arbitrary continuous function on $G(d,j)$. Obviously, both valuations are even and they agree on all bodies $K\in{\mathcal K}^d_s$. This implies $\varphi_1=\varphi_2$ (e.g. since they have the same Klain function). Letting $g$ vary, we get the desired result. 

Using \eqref{gm-pgm} and Fubini's theorem, we also obtain a direct formula connecting $\gamma_j(K,\cdot)$ and $\rho_{(j)}(K,\cdot)$.

\begin{prop}\label{prop.4-1} For a generalized zonoid $K$ and a Borel set $A\subset G(d,j)$, we have
$$
\gamma_j(K,A) = \int_{G(d,j)} ({\bf C}_j \nu_j^A)(L)\rho_{(j)}(K,dL) ,
$$
where $\nu_j^A$ denotes the restriction of $\nu_j$ to $A$.
\end{prop}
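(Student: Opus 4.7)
The plan is to start from the explicit representation \eqref{gm-pgm} that was just derived in the course of proving Theorem \ref{grassmann} and simply swap the order of integration. Writing out $\gamma_j(K,A)$ via \eqref{gm-pgm} gives
\begin{equation*}
\gamma_j(K,A) = \int_{G(d,j)}\left(\int_{G(d,j)} \langle M,N\rangle\rho_{(j)}(K,dM)\right){\bf 1}\{N\in A\}\nu_{j}(dN),
\end{equation*}
which absorbs the indicator into a restricted measure:
\begin{equation*}
\gamma_j(K,A) = \int_{G(d,j)}\int_{G(d,j)} \langle M,N\rangle\,\rho_{(j)}(K,dM)\,\nu_{j}^A(dN).
\end{equation*}

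Next, I would invoke Fubini to interchange the two integrations. This is legitimate because $K$ is a generalized zonoid, so $\rho_{(j)}(K,\cdot)$ is a finite signed Borel measure on $G(d,j)$, $\nu_j^A$ is a finite measure, and the integrand $\langle M,N\rangle$ is continuous in $(M,N)$ and bounded above by $1$. After swapping, I would use the symmetry $\langle M,N\rangle=\langle N,M\rangle$ (the absolute value of the determinant of a projection is symmetric in its two subspace arguments) to recognize
\begin{equation*}
\int_{G(d,j)} \langle M,N\rangle\,\nu_j^A(dN) \;=\; ({\bf C}_j\nu_j^A)(M),
\end{equation*}
by the very definition of the cosine transform used in the paragraph preceding the proposition. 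Substituting this back yields exactly the claimed identity.

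There is no real obstacle here; the statement is essentially a Fubini repackaging of \eqref{gm-pgm}, and the only point worth flagging explicitly is the justification of Fubini for the signed measure $\rho_{(j)}(K,\cdot)$, which reduces to the finiteness of its total variation together with the boundedness of the kernel $\langle\cdot,\cdot\rangle$.
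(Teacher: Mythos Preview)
Your proposal is correct and matches the paper's own argument: the proposition is stated as an immediate consequence of \eqref{gm-pgm} together with Fubini's theorem, which is precisely what you do. The only additional remark you make---the explicit justification of Fubini via finiteness of the total variation of $\rho_{(j)}(K,\cdot)$ and boundedness of $\langle\cdot,\cdot\rangle$---is a welcome clarification but not a departure from the paper's route.
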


\section{Proof of Theorem \ref{charact-1}}

If $\varphi\in{\bf Val}_j$ is even, it can be reconstructed from its Klain function ${\bf K}_j\varphi$ on the set of generalized zonoids. In fact, for a generalized zonoid $K$, Klain showed that
\begin{align*}
\varphi (K) = \frac{2^j}{j!} \int_{S^{d-1}}\cdots\int_{S^{d-1}} &({\bf K}_j\varphi)(\lin\{u_1,\dots,u_j\}) D_j(u_1,\dots ,u_j)\\
&\quad\times\rho(K,du_1)\dots\rho(K,du_j) 
\end{align*}
(see \cite[Theorem 6.4.12]{S}). Using the projection generating measure $\rho_{(j)}(K,\cdot)$, we can rewrite this as
\begin{align}\label{projgen}
\varphi (K) &=  \int_{G(d,j)}({\bf K}_j\varphi)(L)\, \rho_{(j)}(K,dL). 
\end{align}
Since projection generating measures cannot be extended to arbitrary bodies $K\in{\mathcal K}^d_s$ in a continuous way (see the discussion in \cite{HRW}), the question of a complete reconstruction is still open.  

We can give here a solution for valuations, for which the Klain function ${\bf K}_j\varphi$ is the cosine transform of a function $f\in C(G(d,j))$.

\begin{theorem}\label{complrec} Let $\varphi\in {\bf Val}_j^+$, $j\in\{1,\dots,d-2\}$ and $f\in C(G(d,j))$. Then, we have
\begin{equation}\label{valrepr}
\varphi (K) = \int_{G(d,j)} f(L) \gamma_j(K,dL)
\end{equation}
for all $K\in{\mathcal K}^d_s$, iff
\begin{equation}\label{croftonmeas2}
{\bf K}_j\varphi = {\bf C}_j f .
\end{equation}
\end{theorem}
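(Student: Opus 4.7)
The plan is to reduce Theorem \ref{complrec} to the injectivity of the Klain embedding on ${\bf Val}_j^+$, combined with Theorem \ref{grassmann} and the Crofton-measure computation already carried out after equation \eqref{croftonmeas} in Section 3.

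I would begin by defining
$$\varphi'(K) := \int_{G(d,j)} f(L)\,\gamma_j(K,dL) = \int_{G(d,j)} f(L)\,V_j(K|L)\,\nu_j(dL),$$
where the second equality is Theorem \ref{grassmann}. Since $K\mapsto\gamma_j(K,\cdot)$ is a translation invariant, $j$-homogeneous, continuous and reflection invariant valuation (as recorded in Section 2), the functional $\varphi'$ lies in ${\bf Val}_j^+$.

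Next I would identify the Klain function of $\varphi'$. Fix $M\in G(d,j)$ and apply $\varphi'$ to the $j$-dimensional unit ball $B_M\subset M$, which is centrally symmetric. Using the elementary projection identity $V_j(B_M|L)=\kappa_j\langle M,L\rangle$, one obtains $\varphi'(B_M)=\kappa_j\,({\bf C}_jf)(M)$, while the defining property of the Klain function gives $\varphi'(B_M)=({\bf K}_j\varphi')(M)\cdot\kappa_j$. Hence ${\bf K}_j\varphi' = {\bf C}_j f$.

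The \emph{only if} direction of the theorem now follows by repeating the same ball computation with $\varphi$ in place of $\varphi'$: under \eqref{valrepr} one has $\varphi(B_M)=\varphi'(B_M)$ for every $M\in G(d,j)$, and this yields \eqref{croftonmeas2}. For the \emph{if} direction, assume \eqref{croftonmeas2}; then the previous paragraph gives ${\bf K}_j\varphi = {\bf K}_j\varphi'$, and the injectivity of the Klain embedding on ${\bf Val}_j^+$ (recalled after \eqref{flagfunction}) yields $\varphi=\varphi'$ on all of ${\mathcal K}^d$, in particular on ${\mathcal K}^d_s$, which is \eqref{valrepr}. No serious obstacle is expected: the argument essentially formalizes the Crofton-measure discussion of Section 3, with Theorem \ref{grassmann} supplying the bridge between $\gamma_j(K,\cdot)$ and the projection average against $\nu_j$.
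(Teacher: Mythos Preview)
Your argument is correct, but it takes a different route from the paper's. The paper deliberately proves Theorem \ref{complrec} \emph{without} invoking Theorem \ref{grassmann}: for the ``if'' direction it combines formula \eqref{gm-pgm} with Klain's representation \eqref{projgen} via the projection generating measure $\rho_{(j)}(K,\cdot)$ on generalized zonoids, and then extends to all of ${\mathcal K}^d_s$ by density; for the ``only if'' direction it works directly with the flag-measure definition of $\gamma_j$, evaluating $\Psi_j'(K|L,\cdot)$ for a $j$-dimensional $K\subset L'$. Your approach instead applies Theorem \ref{grassmann} at the outset to rewrite the $\gamma_j$-integral as $\int f(L)\,V_j(K|L)\,\nu_j(dL)$, after which everything reduces to the elementary Klain-function computation for a Crofton-type valuation together with the injectivity of the Klain embedding. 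This is precisely the shortcut the paper itself acknowledges in the remark immediately following its proof. The payoff of your route is brevity and conceptual transparency; the payoff of the paper's route is that Theorem \ref{complrec} becomes logically independent of Theorem \ref{grassmann}, so that the subsequent derivation of Theorem \ref{charact-1} from Theorem \ref{complrec} and Theorem \ref{grassmann} is not circular in appearance.
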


\begin{proof} Assume ${\bf K}_j\varphi = {\bf C}_j f.$ 
Combining  \eqref{gm-pgm} with \eqref{projgen}, we get, for a generalized zonoid $K$,
\begin{align*}
\int_{G(d,j)} f(N)\gamma_j(K,dN) &= \int_{G(d,j)}\int_{G(d,j)} f(N)\langle M,N\rangle \rho_{(j)}(K,dM)\nu_j(dN)\\
&= \int_{G(d,j)}\left(\int_{G(d,j)} f(N)\langle M,N\rangle \nu_j(dN)\right)\rho_{(j)}(K,dM)\\
&= \int_{G(d,j)} ({\bf K}_j\varphi)(M)\, \rho_{(j)}(K,dM)\\
&= \varphi (K).
\end{align*}
This implies
$$
\varphi (K) = \int_{G(d,j)} f(N)\gamma_j(K,dN),
$$
for arbitrary $K\in{\mathcal K}^d_s$, by continuity, since generalized zonoids are dense in $ {\mathcal K}^d_s$.

In the other direction, let $L'\in G(d,j)$ and $K\in{\mathcal K}^d_s, K\subset L', V_j(K)>0$.
Then, our assumption and the definition of the Klain function yield
\begin{align*}
{\bf K}_j\varphi &(L')V_j(K)= \varphi(K)\\
&= \int_{F(d,j+1)} f(u^\perp\cap L) \psi_j(K,d(u,L))\\
&= \int_{G(d,j+1)} \int_{S^{d-1}} f(u^\perp\cap L) \Psi_j'(K|L,du)\nu_{j+1}(dL) .
\end{align*}
For $\nu_{j+1}$-almost all $L$, the projection $K|L$ is $j$-dimensional. Therefore, the area measure $\Psi_j'(K|L,\cdot)$ is concentrated on the two normals $u_0,-u_0$ of $K|L$ in $L$.  Since $V_j(K|L)=$ $V_j(K)\langle L,L'\rangle$, we get
\begin{align*}
{\bf K}_j\varphi &(L')V_j(K)\\
&= \int_{G(d,j+1)}  f(u_0^\perp\cap L) V_j(K|L)\nu_{j+1}(dL) \\
&= V_j(K) \int_{G(d,j+1)}  f(L'| L) \langle L',L'|L\rangle\nu_{j+1}(dL) \\
&= V_j(K) \int_{G(d,j)}  f(M) \langle L',M\rangle\nu_{j}(dM) \\
&= V_j(K) {\bf C}_j f (L'),
\end{align*}
which yields the assertion. 
\end{proof}

We remark, that condition \eqref{valrepr} for symmetric bodies extends to all $K\in{\mathcal K}^d$. This follows as in the proof of Theorem \ref{grassmann}.

We also remark that the proof of Theorem \ref{complrec} given here is completely independent of Theorem \ref{grassmann}. If we take Theorem \ref{grassmann} into account, then \eqref{valrepr} yields
$$
\varphi (K) = \int_{G(d,j)} f(L) V_j(K|L)\nu_j(dL) ,
$$
hence $\varphi$ has Crofton measure 
$$\mu = \int_{G(d,j)} {\bf 1}\{ L\in \cdot\} f(L)\nu_j(dL)$$ 
and in this case \eqref{croftonmeas2} reduces to \eqref{croftonmeas}. The equivalence of \eqref{valrepr} and \eqref{croftonmeas2} thus follows from the equivalence explained at the end of Section 3.

The range of the cosine transform on $G(d,j)$ was determined by Alesker and Bernstein \cite{AB} and it was shown that the Klain function of a smooth valuation $\varphi\in {\bf Val}_j^{\infty,+}$ lies in this range (in fact, the image of the Klain embedding of smooth valuations coincides with the image of the cosine transform on smooth functions). From this fact and Theorem \ref{complrec}, we get Theorem \ref{charact-1}, immediately. Alesker \cite{A1} stated the representation \eqref{alesker} in the proof of his Theorem 1.1.1.


Theorem \ref{complrec} indicates that a representation
$$
\varphi (K) = \int_{G(d,j)} f(L) \gamma_j(K,dL), \quad K\in {\mathcal K}^d_s,
$$
with a continuous function $f=f_\varphi$, is not possible for all $\varphi\in {\bf Val}_j^+$, if $j\in\{1,\dots,d-2\}$. This can be seen explicitly for the mixed volume
$$
\varphi (K) = V(K [j], M[d-j]),
$$
where $M\in {\mathcal K}^d_s$ is a fixed body. Assume
$$
\varphi (K) = \int_{G(d,j)} f_M(L) \psi_j(K,dL)
$$
holds for some function $f_M\in C(G(d,j))$ and all $K\in{\mathcal K}^d_s$. Then, Theorem \ref{grassmann} implies
$$
V(K [j], M[d-j]) = \int_{G(d,j)} f_M(L) V_j(K|L) \nu_j(dL).
$$
Choosing $K\subset L'$ for $L'\in G(d,j)$ yields
$$
V_{d-j}(K|L'^\perp) = \int_{G(d,j)} f_M(L) \langle L,L'\rangle \nu_j(dL).
$$
If we replace $L'^\perp$ by $\bar L$ and define $f_M^\perp(L) = f_M(L^\perp)$, we obtain
$$V_{d-j}(M|\bar L) = \left[{\bf C}_{d-j} \int_{(\cdot)} f_M^\perp (L)\nu_{d-j}(dL)\right] (\bar L) .$$
Since $M\in{\mathcal K}^d_s$ was arbitrary, this yields a contradiction, as there are bodies $M\in{\mathcal K}^d_s$, for which the projection function $V_{d-j}(M|\cdot)$ on $G(d,d-j)$ is not the cosine transform of a measure of the form $\int_{(\cdot)} f_M^\perp (L)\nu_{d-j}(dL)$. More precisely, for $2\le j\le d-2$,  $V_{d-j}(M|\cdot)$ need not be the cosine transform of a measure at all (see the discussion in \cite{HRW}), whereas in the case $j=1$, it is (proportional to) the cosine transform of the area measure $\Psi_{d-1}(M,\cdot)$, but the latter is not absolutely continuous, in general.

So far, the discussion was about the representation
$$
\varphi (K) = \int_{G(d,j)} f(L) \gamma_j(K,dL)
$$
for a valuation $\varphi\in {\bf Val}^+_j$. This is equivalent to
\begin{equation}\label{genrep}
\varphi (K) = \int_{F(d,j+1)} g(u,M) \psi_j(K,d(u,M)),
\end{equation}
where $g\in C(F(d,j+1))$ is of the form
$$
g(u,M)=f(u^\bot\cap M),\quad (u,M)\in F(d,j+1),
$$
for some $f\in C(G(d,j)$. Thus, a more general question would be to ask which even valuations $\varphi$ have a representation \eqref{genrep}, if general functions $g\in C(F(d,j+1))$ are allowed?

\section{Proof of Theorem \ref{projaver-1}}

We first describe the connection between the Grassmann measure $\gamma_j(K,\cdot)$ and the area measure $\Psi_j(K,\cdot)$ for centrally symmetric bodies. To explain this, we 
use the Radon transform $R_{k,1}$ which maps continuous functions $f$ on $G(d,k)$ to even functions on $S^{d-1}$ by
$$
R_{k,1}f (u) = \frac{1}{2}\int_{G(u,k)} f(L) \nu^u_k(dL),\quad u\in S^{d-1} .
$$
Here, $G(u,k)=G(-u,k)=\{ L\in G(d,k) : u\in L\}$, $\nu^u_k$ is the invariant probability measure on $G(u,k)$ and we assume $k\in\{ 2,\dots, d-1\}$.  By duality, $R_{k,1}$ can be extended to a mapping from finite measures on $G(d,k)$ to even measures on $S^{d-1}$. The transpose $R_{1,k}$ maps even functions and measures on $S^{d-1}$ to functions and measures on $G(d,k)$. We also denote the image of a measure $\rho$ on $G(d,k)$ under the orthogonality transform $G(d,k)\to G(d,d-k)$, $L\mapsto L^\perp$, by $\rho^\perp$.

\begin{theorem} For $K\in {\mathcal K}^d_s$ and $j=1,\dots,d-2$, we have
$$
\tilde\Psi_j(K,\cdot) = R_{d-j,1}\gamma_j^\perp(K,\cdot)
$$
where 
$$
\tilde\Psi_j(K,\cdot) = \int_{G(d,j+1)}\Psi_j(K|L,\cdot)\nu_{j+1}(dL)
$$
is the projection mean of the area measure $\Psi_j(K,\cdot)$ of $K$.
\end{theorem}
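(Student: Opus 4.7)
The plan is to test both sides against an arbitrary continuous $f\in C(S^{d-1})$; since both $\tilde\Psi_j(K,\cdot)$ and $R_{d-j,1}\gamma_j^\perp(K,\cdot)$ are even and finite on $S^{d-1}$ — the former by central symmetry of $K$, the latter by construction — it suffices to work with even $f$. For the right-hand side, the duality defining $R_{d-j,1}$ on measures yields
\begin{align*}
\int_{S^{d-1}} f\,dR_{d-j,1}\gamma_j^\perp(K,\cdot)
&= \int_{G(d,d-j)} (R_{1,d-j}f)(N)\,\gamma_j^\perp(K,dN)\\
&= \int_{G(d,j)} (R_{1,d-j}f)(M^\perp)\,\gamma_j(K,dM),
\end{align*}
where $(R_{1,d-j}f)(N)$ is (up to normalization) the mean of $f$ over the great subsphere $S^{d-1}\cap N$. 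Substituting the definition of $\gamma_j(K,\cdot)$ as the image of $\Psi_j'(K|L,du)\otimes\nu_{j+1}(dL)$ under the map $(u,L)\mapsto u^\perp\cap L$, and using $(u^\perp\cap L)^\perp = L^\perp\oplus\mathrm{span}(u)$, converts this into a triple integral over $L\in G(d,j+1)$, $u\in S^{d-1}\cap L$ and $v\in S^{d-1}\cap (u^\perp\cap L)^\perp$.

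The key input for the left-hand side is a spreading formula for area measures of lower-dimensional bodies. Since $K|L$ sits in the $(j+1)$-dimensional subspace $L$, at each relative boundary point of $K|L$ with outer $L$-normal $u\in S^{d-1}\cap L$ the $\R^d$-normal cone is $L^\perp + \R_{\geq 0} u$, whose unit vectors sweep out a hemisphere inside $(u^\perp\cap L)^\perp$ centred on $u$. Analysing the local Steiner formula for $K|L$ in $\R^d$ via the intrinsic Steiner decomposition inside $L$ yields
\[
\int_{S^{d-1}} f(v)\,\Psi_j(K|L,dv) = C_{d,j}\int_{S^{d-1}\cap L}\Psi_j'(K|L,du)\int_{\mathrm{hem}(u)} f(v)\,\mathcal{H}^{d-j-1}(dv),
\]
with the constant $C_{d,j}$ fixed by the total-mass identity $\Psi_j(K|L,S^{d-1})=V_j(K|L)=\Psi_j'(K|L,S^{d-1}\cap L)$. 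For even $f$ the hemispherical integral equals one half of the integral over the full great subsphere $S^{d-1}\cap(u^\perp\cap L)^\perp$; averaging against $\nu_{j+1}(dL)$ then reproduces exactly the triple integral obtained for the right-hand side, establishing the identity for every even continuous $f$ and hence for the measures themselves.

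The main obstacle is the spreading formula together with the reconciliation of normalization constants: one must both derive it (either directly from the local Steiner formula or by appealing to standard formulae for area measures of bodies of low dimension) and check that the factor coming from the hemispherical area $\tfrac{1}{2}(d-j)\kappa_{d-j}$ cancels precisely against the factor $\tfrac{1}{2}$ appearing in the definition $R_{k,1}f(u)=\tfrac{1}{2}\int_{G(u,k)}f(L)\nu_k^u(dL)$ and its adjoint $R_{1,d-j}$ on measures. The Fubini-type compatibility of the invariant probability on the flag space $F(d,d-j)$ makes this cancellation automatic once the spreading formula is in hand.
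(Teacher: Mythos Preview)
Your approach is correct and close in spirit to the paper's, but organized differently. Both proofs test against a continuous (even) $f$ and rewrite the right-hand side via duality as $\int_{G(d,j)}(R_{1,d-j}f)(M^\perp)\,\gamma_j(K,dM)$. The divergence is in how the left-hand side $\int_{G(d,j+1)}\int f\,d\Psi_j(K|L,\cdot)\,\nu_{j+1}(dL)$ is matched to this.

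The paper recognizes the inner integral as $\varphi(K|L)$ for the valuation $\varphi=\int f\,d\Psi_j(\,\cdot\,,\cdot)\in{\bf Val}_j^+$ and then invokes the earlier Theorem~\ref{Grassmannmeas}, which gives $\int_{G(d,j+1)}\varphi(K|L)\,\nu_{j+1}(dL)=\int({\bf K}_j\varphi)\,d\gamma_j(K,\cdot)$. All that remains is to compute the Klain function: since $\Psi_j$ of a $j$-dimensional body in $M$ is $V_j$ times the normalized spherical measure on $S^{d-1}\cap M^\perp$, one reads off $({\bf K}_j\varphi)(M)=(R_{1,d-j}f)(M^\perp)$, and the constant is pinned down by taking $f\equiv 1$. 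No further normalization bookkeeping is needed.

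You instead unwind $\gamma_j$ back to the flag integral $\int_{G(d,j+1)}\int_{S^{d-1}\cap L}(\cdots)\,\Psi_j'(K|L,du)\,\nu_{j+1}(dL)$ on the right, and on the left invoke the spreading formula relating $\Psi_j(K|L,\cdot)$ in $\R^d$ to $\Psi_j'(K|L,\cdot)$ in $L$ for the $(j+1)$-dimensional body $K|L$ (a genuinely stronger input than the degenerate $j$-dimensional case the paper uses for the Klain function). This is a valid and more self-contained route --- it avoids appealing to Theorem~\ref{Grassmannmeas} and the Klain embedding --- but the trade-off is that the hemispherical constant must be tracked explicitly, as you acknowledge. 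The paper's valuation-theoretic packaging sidesteps that entirely via the $f\equiv 1$ trick.
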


\begin{proof}
For $f\in C_o(S^{d-1}$), the Banach space of continuous functions on $S^{d-1}$ with centroid $0$, we consider the valuation $\varphi =\varphi_f$ given by
$$
\varphi (K) = \int_{S^{d-1}} f(u)\Psi_j(K,du),\quad K\in{\mathcal K}^d.
$$
For a body $K\in{\mathcal K}^d$ with dimension $j$, the $j$th order area measure is proportional to the spherical Lebesgue measure in the $(d-j)$-space orthogonal to $K$. Hence, the Klain function of $\varphi$ satisfies
$$
({\bf K}_j\varphi)(L) = c R_{1,d-j}f(L^\bot),\quad L\in G(d,j).
$$
Chosing $f=1$, we have ${\bf K}_j\varphi = R_{1,d-j}f =1$, hence $c=1$.
Using Theorem \ref{Grassmannmeas} and Fubini's theorem, we thus get 
\begin{align*}
\int_{S^{d-1}} f(u)&[R_{d-j,1}\gamma_j^\perp(K,\cdot)](du) \\
&=\int_{G(d,j)} R_{1,d-j}f(L^\bot) \gamma_j(K,dL)\\
&= \int_{G(d,j+1)} \int_{S^{d-1}}f(u)\Psi_j (K|L,du)\nu_{j+1}(dL)\\
&= \int_{S^{d-1}}f(u)\left[\int_{G(d,j+1)}\Psi_j (K|L,\cdot)\nu_{j+1}(dL) \right] (du)\\
&= \int_{S^{d-1}} f(u) \tilde\Psi_j(K,du) .
\end{align*}
Since this holds for all continuous functions $f$, we obtain the result.
\end{proof}

From the above proof and Theorem \ref{grassmann}, we obtain
\begin{align*}
\int_{S^{d-1}} f(u) \tilde\Psi_j(K,du) &= \int_{G(d,j)} R_{1,d-j}f(L^\bot) \gamma_j(K,dL)\\
&= \int_{S^{d-1}} f(u) [R_{d-j,1}V_j(K|\cdot^\bot)](u)\omega (du) ,
\end{align*}
for a continuous function $f$  and the invariant probability measure $\omega$ on $S^{d-1}$. 
Here, we have used Theorem 7.1.1 in \cite{SW}. Since
\begin{align*}
[R_{d-j,1}V_j(K|\cdot^\bot)](u)
&=\int_{G(u,d-j)} V_j(K|L^\perp)\nu_{d-j}^u(dL)\\
&=\int_{G(u^\perp,j)} V_j(K|L)\nu_{j}^{u^\perp}(dL)\\
&= c^{j,d-1-j}_{d-1,0}V_j(K|u^\perp) ,
\end{align*}
as follows from the global version of \eqref{projaver}, 
and letting $f$ vary, we get the case $k=j+1$ of the following result.

\begin{koro}\label{projaver-0} For $K\in{\mathcal K}^d_s$, $A\subset S^{d-1}$ and $1\le j <k \le d-1$, we have
\begin{equation}\label{aver}
\int_{G(d,k)}\Psi_j (K|L, A)\nu_{k}(dL)  = c^{k-1,d-k}_{d-1,0} \int_{A} V_j(K|u^\perp)\omega (du) .
\end{equation}
\end{koro}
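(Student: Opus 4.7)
The case $k=j+1$ has just been derived. For general $k\in\{j+2,\dots,d-1\}$, my plan is to follow the same strategy: identify both sides of \eqref{aver}, paired with any continuous test function $f\in C(S^{d-1})$, as even valuations of degree $j$ in $K\in \mathcal{K}^d_s$, and then match them via the Klain embedding. The right side is a valuation by Alesker's representation \eqref{alesker}. The left side is a valuation because $K\mapsto K|L$ is a valuation for every $L$: when $K_1\cup K_2$ is convex and $y\in K_1|L\cap K_2|L$, the fibers $K_i\cap(y+L^\perp)$ are nonempty compact convex sets with convex union, hence by connectedness they must intersect, yielding $(K_1\cap K_2)|L = K_1|L\cap K_2|L$.

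Using the representation $\Psi_j(M,\cdot) = V_j(M)\mu_{L(M)^\perp}$ for a $j$-dimensional body $M$ (with $\mu_{L(M)^\perp}$ the uniform probability measure on $S^{d-1}\cap L(M)^\perp$), together with $V_j(K|L) = V_j(K)\langle L',L\rangle$ and $V_j(K|u^\perp) = V_j(K)|u|L'^\perp|$ for $K$ of dimension $j$ with linear hull $L'\in G(d,j)$, the equality of Klain functions reduces to the measure identity
\[
\int_{G(d,k)} \langle L',L\rangle\,\mu_{(L'|L)^\perp}(du)\,\nu_k(dL) \;=\; c_{d-1,0}^{k-1,d-k}\, |u|L'^\perp|\,\omega(du)
\]
on $S^{d-1}$, for every $L'\in G(d,j)$. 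Both measures are $O(L')\times O(L'^\perp)$-invariant, hence are determined by a density depending only on $t:=|u|L'|\in[0,1]$. The identity can then be verified by testing against zonal functions $f(u) = F(|u|L'|)$ and reducing to a one-variable integral, which is evaluated via the disintegration of $\nu_k$ along the map $L\mapsto L'|L$ combined with Theorem 7.1.1 of \cite{SW}; the constant $c_{d-1,0}^{k-1,d-k}$ is pinned down by the global Cauchy--Kubota formula $\int_{G(d,k)} V_j(K|L)\nu_k(dL) = c\, V_j(K)$ applied in dimension $d$ and also within $u^\perp$.

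The main obstacle is this Funk--Hecke-style computation on the Grassmannian, i.e., the explicit evaluation of the orbital integral arising from the fibers of $L\mapsto L'|L$. A conceptually cleaner route, which I would try first, is to argue that the left-hand side of \eqref{aver}, viewed as an $SO(d)$-covariant, translation-invariant, continuous measure-valued valuation of degree $j$, differs from $\tilde\Psi_j(K,\cdot) = \int_{G(d,j+1)}\Psi_j(K|L,\cdot)\nu_{j+1}(dL)$ only by a scalar depending on $d,j,k$. The scalar is pinned down by integrating both sides against $f = 1$ and invoking the global Cauchy--Kubota formula, whereupon the $k=j+1$ case, applied to $\tilde\Psi_j$, yields the general identity with the stated constant.
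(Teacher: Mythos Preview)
Your Klain-embedding route is a legitimate reduction: pairing both sides of \eqref{aver} with a continuous test function yields valuations in ${\bf Val}_j^+$, and their equality is indeed equivalent to the measure identity on $S^{d-1}$ that you display. But you do not actually verify that identity; you outline a plan of attack and explicitly flag it as ``the main obstacle''. As written this is a programme, not a proof.

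Your alternative ``conceptually cleaner'' route has a genuine gap. You claim that the left-hand side of \eqref{aver}, as an $SO(d)$-covariant, translation-invariant, continuous, even, $j$-homogeneous measure-valued valuation, must be a scalar multiple of $\tilde\Psi_j(K,\cdot)$. But this class of valuations is not one-dimensional: the ordinary area measure $\Psi_j(K,\cdot)$ satisfies the very same list of properties, and the paper remarks explicitly that $\tilde\Psi_j(K,\cdot)$ is \emph{not} proportional to $\Psi_j(K,\cdot)$ (since the projection average fails to be locally determined). So covariance and homogeneity alone cannot single out $\tilde\Psi_j$, and your proportionality claim needs an extra ingredient you do not supply.

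The paper's argument is different from both of your proposals and much shorter. Having established the instance $j=k-1$ of \eqref{aver} for every $k$, it substitutes $K+rB^d$ for $K$; then $K|L$ becomes $K|L+rB^L$ and $K|u^\perp$ becomes $K|u^\perp+rB^{u^\perp}$. The Steiner formulas for area measures (in the $k$-space $L$) and for intrinsic volumes (in the $(d-1)$-space $u^\perp$) turn both sides into polynomials in $r$, and comparing the coefficient of $r^{k-1-j}$ yields \eqref{aver} for every $j<k-1$ at once---no orbital integrals on the Grassmannian, and no uniqueness theorem for measure-valued valuations.
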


In contrast to \eqref{projaver}, we integrate here the area measure of $K|L$ as a convex body in $\R^d$. The resulting measure is not locally determined by $K$ and therefore it is not a multiple of $\Psi_j(K,\cdot)$ (compare the characterization result for area measures in \cite{S75}).

\begin{proof} The remaining general case $k\ge j+2$ follows, if we apply the result for $j=k-1$ (which is already established) and use the Steiner formula for area measures and intrinsic volumes in $K|L, L\in G(d,k)$. 
\end{proof}

To finish the proof of Theorem  \ref{projaver-1}
we use the fact that, under our dimensional condition and for centrally symmetric $K$, the projection function $u\mapsto V_j(K|u^\perp)$ determines $K$ uniquely. In fact, $u\mapsto V_j(K|u^\perp)$ is (up to a constant) the cosine transform of the area measure $\Psi_j(K,\cdot)$ and the latter determines $K$.

\medskip
It is interesting to notice that in Theorem \ref{projaver-1} the symmetry of the bodies is essential. Namely, for $j=1, k=2$ and $d\ge 3$, we have
\begin{align*}
\tilde \Psi_1(K,\cdot) &=
\int_{G(d,2)}\Psi_1(K|L,\cdot)\nu_{2}(dL)\\
&= c\int_{G(d,2)}\square h^*(K|L,\cdot)\nu_{2}(dL)\\
&= c\square \left(\int_{G(d,2)} h^*(K|L,\cdot)\nu_{2}(dL)\right)\\
&= c\square h(P_2(K),\cdot)\\
&= \Psi_1(P_2(K),\cdot)\, ,
\end{align*}
where $c$ is a certain constant and $P_2(K)$ the second {\it projection mean body} of $K$ (for this notion, see \cite{Sch77}, \cite{G98} and the survey \cite{G00}), and where we have used the connection between the first area measure and the centred support function $h^\ast$ (via the block operator $\square$), see, e.g., \cite{GW92}.  As was shown by Goodey \cite{G98}, $P_2$ is not injective in dimension $d=14$, hence there are two different convex bodies $K,K'\subset \R^{14}$ with the same integral mean
$$
 \int_{G(d,2)}\Psi_1(K|L,\cdot)\nu_{2}(dL)=\int_{G(d,2)}\Psi_1(K'|L,\cdot)\nu_{2}(dL).
$$
Since in dimension 14, the fifth multiplier in the spherical harmonic expansion of $P_2$ vanishes (and the others are nonzero), see \cite{G98} again, at least one of the bodies $K,K'$ has to be asymmetric.

This case $j=1,k=2$ also shows that formula \eqref{aver} does not hold, in general, without the assumption of symmetry of $K$. Namely, for $d\not= 14$, the second projection body $P_2(K)$ does determine $K$ (up to translations). Hence $\tilde\Psi_1(K,\cdot)$ determines $K$, whereas the right side of \eqref{aver} obviously does not determine $K$, if $K$ is not symmetric. This indicates, that a uniqueness theorem for $K\mapsto \tilde\Psi_j(K,\cdot)$ might still hold for general $K$ (up to translations) and for certain values of $d$ and $j$. Even more, the cases $(d,j)$ where we have non-uniqueness might be rather sporadic, as has turned out in similar situations (see \cite{GW06a, GW06b}).

\medskip
We finish the section with a version of Corollary \ref{projaver-0} in the spirit of Proposition \ref{prop.4-1}.


\begin{prop}\label{prop.6-1} For  $K\in {\mathcal K}^d_s$, a Borel set $A\subset S^{d-1}$ and $1\le j<k\le d-1$, we have
$$
\int_{G(d,k)} S_j(K|L,A)\nu_k(dL) = \int_{S^{d-1}} [{\bf C}_{d-1} \Psi_j^A(K,\cdot)](u){\omega}(du) ,
$$
where $\Psi_j^A(K,\cdot)$ denotes the restriction of $\Psi_j(K,\cdot)$ to $A$.
\end{prop}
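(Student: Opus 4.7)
My plan is to mirror the proof of Proposition \ref{prop.4-1}, unfolding the cosine transform on the right-hand side and applying Fubini's theorem to interchange the order of integration; both sides will then be expressible as explicit dimensional multiples of $\Psi_j(K,A)$, and matching the scalars finishes the proof.

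For the right-hand side, since $K\in\mathcal{K}^d_s$ the area measure $\Psi_j(K,\cdot)$ is even, so under the identification $u\leftrightarrow u^\perp$ between $S^{d-1}$ (modulo antipodes) and $G(d,d-1)$, the cosine transform ${\bf C}_{d-1}$ of Section~3 unfolds to
$$
[{\bf C}_{d-1}\Psi_j^A(K,\cdot)](u)=\int_A|\langle u,v\rangle|\,\Psi_j(K,dv).
$$
Substituting this into the outer $\omega$-integral and applying Fubini (legitimate since $\Psi_j(K,\cdot)$ is finite and the kernel $|\langle u,v\rangle|$ is bounded by $1$), the inner integral $\int_{S^{d-1}}|\langle u,v\rangle|\,\omega(du)$ is a constant depending only on $d$ by the rotation invariance of $\omega$. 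The right-hand side therefore collapses to that constant times $\Psi_j(K,A)$.

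For the left-hand side I would read $S_j(K|L,A)$ as the area measure $\Psi'_j(K|L,A\cap L)$ computed intrinsically in the projection subspace (consistent with the usage in \eqref{flagprojformula} and Schneider's formula). Schneider's classical projection formula \eqref{projaver} then gives
$$
\int_{G(d,k)}\Psi'_j(K|L,A\cap L)\,\nu_k(dL)=c_{d,k-j}^{k,d-j}\,\Psi_j(K,A),
$$
exhibiting the left-hand side as another dimensional multiple of $\Psi_j(K,A)$. Equating the two multiples produces the identity.

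The main obstacle I foresee is the bookkeeping needed to reconcile the two dimensional constants: Schneider's $c_{d,k-j}^{k,d-j}$ depends on $k$, while the Fubini constant arising on the right depends only on $d$. Verifying that the normalization of ${\bf C}_{d-1}$ adopted in the paper absorbs exactly this $k$-dependence (together with tracking the factors $\kappa_i$ from Section~2) is where the real work lies; apart from this, the argument is a one-step Fubini combined with the classical Schneider projection formula.
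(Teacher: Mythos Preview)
Your reading of $S_j(K|L,A)$ as the \emph{intrinsic} area measure $\Psi'_j(K|L,A\cap L)$ is the source of the trouble. The proposition is announced as ``a version of Corollary~\ref{projaver-0} in the spirit of Proposition~\ref{prop.4-1}'', and in Corollary~\ref{projaver-0} the integrand is $\Psi_j(K|L,\cdot)$, the $j$th area measure of $K|L$ computed in the ambient space $\R^d$. The paper emphasises this distinction immediately after the corollary: ``In contrast to \eqref{projaver}, we integrate here the area measure of $K|L$ as a convex body in $\R^d$. The resulting measure is not locally determined by $K$ and therefore it is not a multiple of $\Psi_j(K,\cdot)$.'' So the left-hand side is \emph{not} a dimensional multiple of $\Psi_j(K,A)$, and your reduction via Schneider's formula \eqref{projaver} does not apply.

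The $k$-dependence you flag is in fact a diagnostic that the approach is wrong, not a bookkeeping nuisance. The cosine transform ${\bf C}_{d-1}$ carries no hidden $k$; there is nothing that could absorb the factor $c^{k,d-j}_{d,k-j}$, which genuinely varies with $k$ (e.g.\ for $d=4$, $j=1$ one gets $2/3$ at $k=2$ and $8/(3\pi)$ at $k=3$). Once both sides are claimed to be scalar multiples of $\Psi_j(K,A)$ with unequal scalars, the argument has already failed.

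The intended route, mirroring Proposition~\ref{prop.4-1}, starts from Corollary~\ref{projaver-0}: the left side equals $c^{k-1,d-k}_{d-1,0}\int_A V_j(K|u^\perp)\,\omega(du)$. One then uses the fact recorded just below Corollary~\ref{projaver-0}, that $u\mapsto V_j(K|u^\perp)$ is (up to a constant) the cosine transform ${\bf C}_{d-1}\Psi_j(K,\cdot)$. Substituting this and applying Fubini to the symmetric kernel $|\langle u,v\rangle|$ interchanges the roles of $\omega$ and $\Psi_j(K,\cdot)$ and produces the stated right-hand side. The Fubini step is the same one you perform on the right, but it has to be fed with Corollary~\ref{projaver-0} rather than with \eqref{projaver}.
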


\section{Touching measures}

Let $\mu_q(K,\cdot)$ be the natural measure on the space $A(d,q,K)$ of affine $q$-flats in $\R^d$ touching the convex body $K$ (see \cite[Section 8.5]{SW}, for the definition of this touching measure and its properties). The image $\nu_q(K,\cdot)$ of $\mu_q(K,\cdot)$ under the mapping $\pi : E\mapsto L(E)$ is another measure on the Grassmannian $G(d,q)$ attributed to $K$. We call it the $q$th {\em direction measure} of $K$. The following result describes the connection between $\gamma_j(K,\cdot)$ and $\nu_{d-j-1}(K,\cdot)$. It involves the Radon transform
$$
R_{jk} : C(G(d,j))\to C(G(d,k)),
$$
defined by
\begin{equation}\label{Radon}
(R_{jk}f)(E) = \int_{G(E,j)} f(L)\nu_j^E(dL),\quad E\in G(d,k),
\end{equation}
where $0\le j,k\le d$, $G(E,j)$ is the Grassmannian of $j$-spaces containing $E$ (or contained in $E$, in case $j
<k$) and $\nu_j^E$ is the rotation invariant probability measure on $G(E,j)$. (The transform $R_{j,1}$ which we used earlier, corresponds to the case $k=1$ of \eqref{Radon},  if lines $E\in G(d,1)$ are identified with pairs $\{ u,-u\}, u\in S^{d-1}$.) By duality, $R_{jk}$ extends to a linear mapping on measures, 
$$
R_{jk} : C'(G(d,j))\to C'(G(d,k)),
$$
through
\begin{equation}\label{Radon2}
\int_{G(d,k)}f(L)(R_{jk}\rho)(dL) = \int_{G(d,j)} (R_{kj}f)(M)\rho(dM),\quad f\in C(G(d,k)).
\end{equation}

\begin{theorem} \label{direction}
The Grassmann measure $\gamma_j(K,\cdot)$ satisfies
$$
 \nu_{d-j-1}^\perp(K,\cdot)= 2 R_{j,j+1} \gamma_j(K,\cdot) .
$$
\end{theorem}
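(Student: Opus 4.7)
The plan is to reduce both sides of the asserted identity to explicit densities on $G(d,j+1)$ with respect to $\nu_{j+1}$, and then match them.

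For the right-hand side, Theorem \ref{grassmann} gives $\gamma_j(K,dL)=V_j(K|L)\,\nu_j(dL)$. For $f \in C(G(d,j+1))$, the duality formula \eqref{Radon2} then yields
\begin{align*}
\int f\,d R_{j,j+1}\gamma_j(K,\cdot) &= \int_{G(d,j)}\Big(\int_{G(L,j+1)} f(M)\,\nu_{j+1}^L(dM)\Big)V_j(K|L)\,\nu_j(dL).
\end{align*}
Since the rotation-invariant measure on the incidence variety $\{(L,M)\colon L\subset M,\ L\in G(d,j),\ M\in G(d,j+1)\}$ disintegrates equivalently as $\nu_j(dL)\nu_{j+1}^L(dM)$ or as $\nu_{j+1}(dM)\nu_j^M(dL)$, Fubini gives
$$R_{j,j+1}\gamma_j(K,dM)=\Big(\int_{G(M,j)} V_j\bigl((K|M)|L\bigr)\,\nu_j^M(dL)\Big)\nu_{j+1}(dM),$$
and the inner integral is the Cauchy-Kubota mean of $V_j$ of the body $K|M$ taken in the $(j+1)$-dimensional ambient space $M$, which evaluates to $\tfrac{2\kappa_j}{(j+1)\kappa_{j+1}} V_j(K|M)$.

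For the left-hand side, I would use the orthogonal parametrization of the touching measure from \cite[Sec.~8.5]{SW}: every affine $(d-j-1)$-flat is of the form $L+x$ with $L\in G(d,d-j-1)$ and $x\in L^\perp$, and it touches $K$ precisely when $x \in \partial(K|L^\perp)$. This gives a product representation of $\mu_{d-j-1}(K,\cdot)$ involving $\nu_{d-j-1}(dL)$ and the $j$-dimensional Hausdorff measure on $\partial(K|L^\perp)$, with the normalization prescribed in \cite{SW}. Pushing forward under $E\mapsto L(E)$ and then $L\mapsto L^\perp$, and using $\mathcal H^j(\partial(K|L^\perp))=2V_j(K|L^\perp)$ in the $(j+1)$-dimensional ambient $L^\perp$, produces $\nu^\perp_{d-j-1}(K,dM)$ as a constant times $V_j(K|M)\,\nu_{j+1}(dM)$.

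Both sides are now proportional to $V_j(K|M)\,\nu_{j+1}(dM)$, so the proof reduces to matching the constants. The main technical obstacle is this bookkeeping: the \cite{SW}-normalization of the touching measure, the factor $2$ coming from $\mathcal H^j(\partial(\cdot))=2V_j(\cdot)$ for a convex body in a $(j+1)$-dim space, and the Cauchy-Kubota constant $\tfrac{2\kappa_j}{(j+1)\kappa_{j+1}}$ must combine to produce exactly the factor $2$ in the asserted identity $\nu_{d-j-1}^\perp(K,\cdot) = 2\,R_{j,j+1}\gamma_j(K,\cdot)$.
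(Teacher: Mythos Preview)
Your approach is the paper's, reorganized. The paper also reduces both sides to a density $V_j(K|M)$ with respect to $\nu_{j+1}$: it quotes the formula $\mu_{d-j-1}(K,A)=2\int_{G(d,j+1)}\Phi_j(K|L,T(A,L))\,\nu_{j+1}(dL)$ from \cite[p.~358]{SW} to obtain $\nu_{d-j-1}^\perp(K,dM)=2V_j(K|M)\,\nu_{j+1}(dM)$ directly (note that $\Phi_j$, not $\mathcal H^j$, appears there, already with the factor $2$, so your detour through $\mathcal H^j(\partial(\cdot))=2V_j(\cdot)$ is unnecessary), and then passes to $\gamma_j$ via Cauchy--Kubota in $L$, the flag Fubini identity (Theorem 7.1.1 in \cite{SW}), and Theorem~\ref{grassmann}. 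You compute the two sides separately and compare; the paper runs it as a single chain. Same ingredients, same order of ideas.

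On the constants you flag: in the paper's chain the Cauchy--Kubota constant and the normalization in Theorem~\ref{grassmann} are a matched pair that cancel, since both compare the $\nu_j$- and $\nu_{j+1}$-means of $L\mapsto V_j(K|L)$. If you insert your value $\tfrac{2\kappa_j}{(j+1)\kappa_{j+1}}$ as an independent factor while simultaneously taking Theorem~\ref{grassmann} verbatim, the pieces will not recombine to the stated factor $2$. So either track these two normalizations together, or bypass Theorem~\ref{grassmann} and compute $R_{j,j+1}\gamma_j$ straight from the defining projection formula \eqref{flagprojformula} for $\psi_j$.
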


\begin{proof} We know from \cite[p. 358]{SW} that 
$$
\mu_{d-j-1}(K,A) = 2\int_{SO_d} \Phi_{j}(K|\vartheta L_{d-j-1}^\perp,T_{d-j-1}(A,\vartheta))\nu(d\vartheta) ,
$$
for all Borel sets $A\subset A(d,d-j-1,K)$, 
with
$$
T_{d-j-1}(A,\vartheta) = \{ x\in L_{d-j-1}^\perp : \vartheta L_{d-j-1}+x\in A\} .
$$
Here, $L_{d-j-1}$ is a fixed space in $G(d,d-j-1)$, $\nu$ is the Haar probability measure on the rotation group $SO_d$,  and $\Phi_j(K,\cdot)$ denotes the $j$th curvature measure of $K$ (a finite measure concentrated on the boundary of $K$). Obviously, this can be re-written as
\begin{equation}\label{touching}
\mu_{d-j-1}(K,A) = 2\int_{G(d,j+1)} \Phi_{j}(K|L,T(A,L))\nu_{j+1}(dL)
\end{equation}
where
$$
T(A,L) = \{ x\in L : L^\perp+x\in A\} .
$$
Hence
\begin{align*}
\nu_{d-j-1}(K,A) &= \mu_{d-j-1}(K,\pi^{-1}(A))\\
&=2\int_{G(d,j+1)} \Phi_{j}(K|L,T(\pi^{-1}(A),L))\nu_{j+1}(dL),\\
\end{align*}
where
\begin{align*}
T(\pi^{-1}(A),L)&= \{x\in L : x+L^\perp\in \pi^{-1}(A)\}\\
&= \{x\in\  {\rm relbd}\, (K|L) : L^\perp\in A\}.\\
\end{align*}
We obtain
\begin{equation}\label{Radontype}
\nu_{d-j-1}(K,A)=2\int_{G(d,j+1)} V_{j}(K|L){\bf 1}\{L^\perp\in A\}\nu_{j+1}(dL).
\end{equation}
In equivalent form, \eqref{Radontype} yields
\begin{align*}
\int_{G(d,j+1)} f(L) \nu_{d-j-1}^\perp (dL) &= 2\int_{G(d,j+1)} f(L) V_j(K|L)\nu_{j+1} (dL)\\
&= 2\int_{G(d,j+1)} \int_{G(L,j)}f(L) V_j(K|M)\nu_j^L(dM)\nu_{j+1} (dL) \\
&= 2\int_{G(d,j)} \int_{G(M,j+1)}f(L) V_j(K|M)\nu_{j+1}^M(dL)\nu_{j} (dM)\\
&= 2\int_{G(d,j)} (R_{j+1,j}f)(M)V_j(K|M)\nu_{j} (dM)\\
&= 2\int_{G(d,j)} (R_{j+1,j}f)(M)\gamma_{j} (K,dM).
\end{align*}
Here, we have used the Cauchy-Kubota formula for $V_j$ in $L$ (see \cite[Theorem 6.2.2]{SW}), the flag formula Theorem 7.1.1 in \cite{SW} (with the notation given there), and Theorem 1.1. 

The assertion follows now from \eqref{Radon2}.
\end{proof}

Letting $A$ vary in \eqref{Radontype} shows that $\nu_{d-j-1}(K,\cdot) = \nu_{d-j-1}(M,\cdot)$, for $K,M\in {\mathcal K}^d_s$, implies $V_j(K|L)=V_j(M|L)$, for all $L\in G(d,j+1)$. This proves Theorem \ref{unique-2}.

\medskip\noindent
{\bf Remark.} In the background of the considerations above is a connection between the touching measure $\mu_q(K,\cdot)$ and the flag support measure $\Theta^{(q)}_{d-q-1}(K,\cdot)$ discussed in \cite{HTW}. The latter is a measure on $\R^d\times S^{d-1}\times G(d,q)$ introduced as
\begin{align}\label{flagsupport}
\Theta^{(q)}_{d-q-1}(K,\cdot) = \int_{G(d,q)}\int {\bf 1}\{ (g(x,L,K),u,L)\in \cdot\} \Theta^{L^\perp}_{d-q-1}(K|L^\perp,d(x,u))\nu_q(dL)
\end{align}
(see \cite[Theorem 4]{HTW}). Here, $\Theta^{L^\perp}_{d-q-1}(K|L^\perp,\cdot)$ is the (highest-order) support measure of $K|L^\perp$ calculated in $L^\perp$ as the ambient space and $g(x,L,K)$ is the point in the boundary of $K$ such that the affine flat $L+x$ touches $K$ in $g(x,L,K)$. As is shown in \cite{HTW}, this contact point is unique (and measurable) for given $K$ and $\nu_q$-almost all $L\in G(d,q)$. 

Since $L+x = L+g(x,L,K)$ for boundary points $x$ of $K|L^\perp$, the image measure $\tilde\mu_q(K,\cdot)$ of $\Theta^{(q)}_{d-q-1}(K,\cdot)$ under the mapping $(x,u,L)\mapsto L+x$ is a measure on $A(d,q,K)$, satisfying
\begin{align*}
\tilde\mu_q(K,\cdot)&= \int_{G(d,q)}\int {\bf 1}\{L+x\in \cdot\} \Theta^{L^\perp}_{d-q-1}(K|L^\perp,d(x,u))\nu_q(dL)\\
&= \int_{G(d,q)}\int {\bf 1}\{L+x\in \cdot\} \Phi_{d-q-1}(K|L^\perp,dx)\nu_q(dL)\\
&= \int_{G(d,d-q)} \Phi_{d-q-1}(K|L^\perp,T(\cdot|L))\nu_{d-q}(dL)\\
&= \frac{1}{2}\mu_q(K,\cdot) ,
\end{align*}
as follows from \eqref{touching}.

The direction measure $\nu_q(K,\cdot)$ of $K$ is thus (up to the factor $\frac{1}{2}$) the image of $\tilde\mu_q(K,\cdot)$ under the mapping $E\mapsto L(E), E\in A(d,q)$. Therefore, $\nu_q(K,\cdot)$ is the image of $\Theta^{(q)}_{d-q-1}(K,\cdot)$ under $(x,u,L)\mapsto L$. Splitting the latter map into $(x,u,L)\mapsto (u,L)$ and $(u,L)\mapsto L$ and using the fact that the image of $\Theta^{(q)}_{d-q-1}(K,\cdot)$ under $(x,u,L)\mapsto (u,L)$ is $S^{(q)}_{d-q-1}(K,\cdot)$ (which is connected to $\psi_{d-q-1}(K,\cdot)$ via \eqref{connect}), we conclude that $\nu_q(K,\cdot)$ is (up to a constant) the image of the flag measure $\psi_{d-q-1}(K,\cdot)$ under the mapping $(u,L)\mapsto L^\perp$. As Theorem 7.1 shows, the image of $\psi_{d-q-1}(K,\cdot)$ under $(u,L)\mapsto u^\perp\cap L$ corresponds to taking the Radon transform $R_{d-q,d-q-1}$. 

\section*{Acknowledgements}

This research has been supported by the DFG project WE 1613/2-2.

\noindent
I thank Andreas Bernig and Franz Schuster for useful remarks on a previous version of the paper.

\end{document}